\newcommand{\nc}{\newcommand}
\nc{\RS}{\operatorname{RicSign}} \nc{\REV}{\operatorname{RicEV}}
\nc{\Gl}{\mathsf{GL}} \nc{\Or}{\mathsf{O}}  \nc{\SO}{\mathsf{SO}}  \nc{\SU}{\mathsf{SU}}  \nc{\Sl}{\mathsf{SL}}
\nc{\G}{\mathsf{G}} \nc{\K}{\mathsf{K}}  \nc{\T}{\mathsf{T}} \nc{\Lsf}{\mathsf{L}}
\nc{\Qb}{\mathsf{Q}_\Beta} \nc{\Hb}{\mathsf{H}_\Beta} \nc{\Ub}{\mathsf{U}_\Beta}
\nc{\Gb}{\mathsf{G}_\Beta} \nc{\Kb}{\mathsf{K}_\Beta}
\nc{\PPP}{\mathsf{P}} \nc{\U}{\mathsf{U}} \nc{\N}{\mathsf{N}} \nc{\Ss}{\mathsf{S}} \nc{\Aa}{\mathsf{A}}
\nc{\laH}{\la\!\la} \nc{\raH}{\ra\!\ra}
\nc{\ipH}{{\laH \cdot, \cdot \raH}}
\nc{\Vg}{{V(\ggo)}}
\nc{\alert}{\color{blue}}
\nc{\fg}{\mathfrak{f}}  \nc{\vg}{\mathfrak{v}} \nc{\wg}{\mathfrak{w}} \nc{\zg}{\mathfrak{z}} \nc{\ngo}{\mathfrak{n}} \nc{\kg}{\mathfrak{k}} \nc{\mg}{\mathfrak{m}} \nc{\bg}{\mathfrak{b}} \nc{\ggo}{\mathfrak{g}} \nc{\ggob}{\overline{\mathfrak{g}}} \nc{\sog}{\mathfrak{so}} \nc{\sug}{\mathfrak{su}} \nc{\spg}{\mathfrak{sp}} \nc{\slg}{\mathfrak{sl}} \nc{\glg}{\mathfrak{gl}} \nc{\cg}{\mathfrak{c}} \nc{\rg}{\mathfrak{r}}  \nc{\hg}{\mathfrak{h}} \nc{\tgo}{\mathfrak{t}} \nc{\ug}{\mathfrak{u}} \nc{\dg}{\mathfrak{d}} \nc{\ag}{\mathfrak{a}} \nc{\pg}{\mathfrak{p}} \nc{\sg}{\mathfrak{s}} \nc{\affg}{\mathfrak{aff}} \nc{\qg}{\mathfrak{q}}
\nc{\Xg}{\mathfrak{X}} \nc{\lgo}{\mathfrak{l}}
\nc{\pca}{\mathcal{P}} \nc{\nca}{\mathcal{N}} \nc{\lca}{\mathcal{L}} \nc{\oca}{\mathcal{O}} \nc{\mca}{\mathcal{M}} \nc{\tca}{\mathcal{T}} \nc{\aca}{\mathcal{A}} \nc{\cca}{\mathcal{C}} \nc{\gca}{\mathcal{G}} \nc{\sca}{\mathcal{S}} \nc{\hca}{\mathcal{H}} \nc{\bca}{\mathcal{B}} \nc{\dca}{\mathcal{D}}
\nc{\vp}{\varphi} \nc{\ddt}{\tfrac{{\rm d}}{{\rm d}t}} \nc{\dds}{\tfrac{{\rm d}}{{\rm d}s}} \nc{\ddtbig}{\frac{{\rm d}}{{\rm d}t}} \nc{\dd}{{\rm d}}
\nc{\dpar}{\tfrac{\partial}{\partial t}} \nc{\im}{\mathtt{i}}
\nc{\RR}{{\mathbb R}} \nc{\HH}{{\mathbb H}} \nc{\CC}{{\mathbb C}} \nc{\ZZ}{{\mathbb Z}}
\nc{\FF}{{\mathbb F}} \nc{\NN}{{\mathbb N}} \nc{\QQ}{{\mathbb Q}} \nc{\PP}{{\mathbb P}}
\nc{\vs}{\vspace{.2cm}} \nc{\vsp}{\vspace{1cm}} \nc{\ip}{{\langle\cdot,\cdot\rangle}}
\nc{\ipp}{(\cdot,\cdot)} \nc{\la}{\langle} \nc{\ra}{\rangle} \nc{\unm}{\tfrac{1}{2}}
\nc{\unc}{\tfrac{1}{4}} \nc{\und}{\tfrac{1}{16}} \nc{\no}{\vs\noindent}
\nc{\lam}{\Lambda^2(\RR^n)^*\otimes\RR^n} \nc{\tangz}{{\rm T}^{\rm Zar}}
\nc{\lamg}{\Lambda^2\ggo^*\otimes\ggo}
\nc{\nor}{{\sf n}}  \nc{\mum}{/\!\!/} \nc{\kir}{/\!\!/\!\!/}
\nc{\Ri}{\tfrac{4\Ric_{\mu}}{||\mu||^2}} \nc{\ds}{\displaystyle}
\nc{\lb}{[\cdot,\cdot]} \nc{\isn}{\tfrac{1}{||v||^2}}
\nc{\gkp}{(\ggo=\kg\oplus\pg,\ip)} \nc{\ukh}{(\ug=\kg\oplus\hg,\ip)}
\nc{\tgkp}{(\tilde{\ggo}=\kg\oplus\pg,\ip)}
\nc{\wt}{\widetilde}
\nc{\raw}{\rightarrow} \nc{\lraw}{\longrightarrow} \nc{\hqn}{\mathcal{H}_{q,n}}
\nc{\Spec}{\operatorname{Spec}} \nc{\Nat}{\operatorname{nat}}
\nc{\ad}{\operatorname{ad}}  \nc{\Aut}{\operatorname{Aut}}   \nc{\Inn}{\operatorname{Inn}}   \nc{\Lie}{\operatorname{Lie}} \nc{\Ad}{\operatorname{Ad}} \nc{\Der}{\operatorname{Der}} \nc{\rad}{\operatorname{r}} \nc{\kf}{\operatorname{B}}
\nc{\End}{\operatorname{End}} \nc{\rank}{\operatorname{rank}} \nc{\Ker}{\operatorname{Ker}} \nc{\tr}{\operatorname{tr}} \nc{\sym}{\operatorname{sym}} \nc{\diag}{\operatorname{diag}} \nc{\proy}{\operatorname{pr}} \nc{\Adj}{\operatorname{Adj}} \nc{\vspan}{\operatorname{span}}
\nc{\Hess}{\operatorname{Hess}}  \nc{\dif}{\operatorname{d}} \nc{\sen}{\operatorname{sen}} \nc{\grad}{\operatorname{grad}} \nc{\Order}{\operatorname{O}} \nc{\divg}{\operatorname{div}}
\nc{\Iso}{\operatorname{Iso}} \nc{\Diff}{\operatorname{Diff}} \nc{\ricci}{\operatorname{ric}}  \nc{\Rc}{\operatorname{Rc}} \nc{\Ricci}{\operatorname{Ric}} \nc{\Riem}{\operatorname{Rm}} \nc{\scalar}{\operatorname{sc}} \nc{\scalarm}{\hat{\operatorname{R}}} \nc{\riccim}{\widehat{\operatorname{Ric}}} \nc{\tang}{\operatorname{T}} \nc{\vol}{\operatorname{vol}}
\nc{\mm}{\operatorname{M}} \nc{\CH}{\operatorname{CH}} \nc{\Irr}{\operatorname{Irr}} \nc{\mcc}{\operatorname{mcc}} \nc{\m}{\operatorname{m}} \nc{\pr}{\operatorname{pr}}
\newcommand{\R}{{\mathbb{R}}}
\nc{\Id}{\operatorname{Id}}  \nc{\mmm}{\operatorname{m}}
\numberwithin{equation}{section}
\theoremstyle{plain}
\newtheorem{theorem}{Theorem}[section]
\newtheorem{proposition}[theorem]{Proposition}
\newtheorem{corollary}[theorem]{Corollary}
\newtheorem{lemma}[theorem]{Lemma}
\theoremstyle{definition}
\theoremstyle{remark}
\newtheorem{remark}[theorem]{Remark}
\newtheorem{example}[theorem]{Example}
\title[The prescribed Ricci curvature problem for naturally reductive metrics]{The prescribed Ricci curvature problem for naturally reductive metrics on compact Lie groups}
\author{Romina M.~Arroyo}
\address{\parbox{\linewidth}{FaMAF $\&$ CIEM, Universidad Nacional de C\'ordoba, C\'ordoba, Argentina \\ School of Mathematics and Physics, The University of Queensland, St~Lucia,~QLD, Australia}\vspace{2pt}}
\email{arroyo@famaf.unc.edu.ar,~r.arroyo@uq.edu.au}
\author{Artem Pulemotov}
\address{School of Mathematics and Physics, The University of Queensland, St Lucia, QLD, Australia}
\email{a.pulemotov@uq.edu.au}
\author{Wolfgang Ziller}
\address{Department of Mathematics,
	University of Pennsylvania,
	Philadelphia, PA, USA}
\email{wziller@math.upenn.edu}
\thanks{This research was supported by the Australian Government through the Australian Research Council's Discovery Projects funding scheme (project DP180102185).  The third author was also supported by the NSF  grant 1506148 and a Reybould Fellowship.}
\begin{document}

\maketitle

\begin{center}
{\it Dedicated to the memory of Yuri Berezansky}
\end{center}

	\begin{abstract}
		We study the problem of prescribing the Ricci curvature in the class of naturally reductive metrics on a compact Lie group. We derive necessary as well as sufficient conditions for the solvability of the equations and provide a series of examples.
	\end{abstract}

%\tableofcontents
%\vspace{-13pt}

\section{Introduction}

%\bigskip

The prescribed Ricci curvature problem consists in finding a Riemannian metric $g$ on a manifold $M$ such that
\begin{align}\label{PRC_noc}
\Ricci_g=T
\end{align}
for a given (0,2)-tensor field~$T$. The first major result on this problem is due to  DeTurck~\cite{DeTurck}, who proved that one can solve the equation locally if $T$ is non-degenerate. More can be said if $T$ is itself a Riemannian metric. For example, in~\cite{DeTurck-Koiso} it was shown that the equation has no global solutions if $M$ is compact and $T$ has sectional curvature less than~$\frac1{n-1}$. As a consequence, for every  metric $T$, one can find a constant $c_0(T)$ such that no $g$ satisfies
\begin{align}\label{Ric_c}
\Ricci_g=cT
\end{align}
when $c>c_0(T)$. On a compact manifold, it is thus natural, instead of~\eqref{PRC_noc}, to solve~\eqref{Ric_c} treating $c$ as one of the unknowns. This paradigm was advocated by DeTurck in~\cite{DeTurck85} and Hamilton in~\cite{Hamilton84}. Notice that $c$ is necessarily nonnegative if $T$ is a homogeneous metric since a compact homogeneous space cannot have negative-definite Ricci curvature~\cite[Theorem~1.84]{Bss}. Moreover, if $c=0$ and $g$ is homogeneous,  then $g$ must be flat~\cite[Theorem~7.61]{Bss}.

There is little known about the problem unless one makes further symmetry assumptions. It is natural to suppose that the metric $g$ and the tensor $T$ are invariant under a Lie group $G$ acting on $M$. The first such work, due to Hamilton, appeared in~\cite{Hamilton84}, where he showed  that for a left-invariant metric $T$ on $\SU(2)$ there exists a left-invariant metric $g$, unique up to scaling, such that~\eqref{Ric_c} is satisfied with~$c>0$. Buttsworth extended this result in~\cite{Buttsworth19} to all signatures of $T$ and all unimodular Lie groups of dimension~3. In this case, the equation may fail to have a solution. The paper~\cite{AP16b} investigates~\eqref{Ric_c} when $M$ is a manifold with two boundary components and the orbits of $G$ are hypersurfaces. We refer to~\cite[Section~2]{BP19} for an overview of other results.

If $M$ is a homogeneous space $G/H$, the problem has been studied extensively. Most results rely on the the following fact proven in~\cite{AP16}: solutions to~\eqref{Ric_c} are precisely the critical points of the scalar curvature functional $S$ on the set of $G$-invariant metrics, subject to the constraint $\tr_gT=1$. A range of methods were developed in~\cite{AP16,MGAP18,AP19} to show that, in some cases,~\eqref{Ric_c} has a solution by showing that $S$ attains its global maximum. This approach has been very successful for several classes of homogeneous spaces; see, e.g.,~\cite{AP16,MGAP18,AP19}. In general, however, complete solvability results are only available when the isotropy representation of $G/H$ has two irreducible summands; see~\cite{AP16}. We refer to~\cite{BP19} for a survey.

Naturally reductive metrics are a generalisation of normal homogeneous metrics; however, they comprise a much larger set. They have been considered by a number of authors for various geometric applications; see, e.g.,~\cite{DZ79,GS10,EL19}. Given a subgroup $K<G$, denote by $\mathcal M_K$ the set of all left-invariant naturally reductive metrics on $G$ that are invariant under right translations by~$K$. It was shown in~\cite{DZ79} that, on a compact simple Lie group $G$, every left-invariant naturally reductive metric on $G$ lies in $\mathcal M_K$ for some~$K$. 
In this paper, given $T\in \mathcal M_K$, we want to find  solutions to~\eqref{Ric_c} that lie in~$\mathcal M_K$. However, if the isotropy representation of $G/K$ is reducible, then~\eqref{Ric_c} is equivalent to an overdetermined system of algebraic equations. In this case, one cannot expect substantial existence results. We thus assume that $G/K$ is isotropy irreducible and that $G$ is simple. We will show that, when these assumptions hold, solutions to~\eqref{Ric_c} are again critical points of the scalar curvature functional $S\colon \mathcal M_K\to \R$ subject to $\tr_gT=1$.

We prove that, under some simple conditions on $T\in \mathcal M_K$, the functional $S$, subject to the constraint $\tr_gT=1$, attains its global maximum (see Theorem~\ref{sufcon}) and hence~\eqref{Ric_c} has a solution. We also prove that there is a class of metrics $T\in \mathcal M_K$ for which no critical point exists (see Theorem~\ref{neccon}). These results do not solve the problem completely. Figure~1 indicates, for a specific choice of $G$ and~$K$, the regions where the necessary and the sufficient conditions are satisfied. We also indicate in the same example the precise region where solutions exist. This was found with the help of Renato Bettiol by implementing the cylindrical algebraic decomposition algorithm; see~\cite{CD98}.

We exhibit the global behaviour of the scalar curvature functional by drawing its graph in some special cases. In one of our examples, $S$ has a global maximum without the sufficient condition of Theorem~\ref{sufcon} being satisfied; in another, no critical point exists despite the necessary condition of Theorem~\ref{neccon} being satisfied. As we explain at the end of paper, the former example demonstrates an interesting behaviour that has not been observed before on homogeneous spaces. If $G$ and $K$ are both simple, we are able to provide a complete answer to the question of solvability of~\eqref{Ric_c}; see Proposition~\ref{prop_simple_gr}.

\vs \noindent {\it Acknowledgements.} We thank Renato Bettiol for implementing the cylindrical algebraic decomposition algorithm to help us produce Figure~\ref{T_chart_T1T2}. We are also grateful to Mark Gould and Ramiro Lafuente for helpful discussions. The third author would like to thank the University of Queensland for its hospitality and the Ethel Harriet Raybould Trust for its financial support.

\section{Preliminaries}\label{secnat}

In this section, we recall the characterisation of naturally reductive metrics on a simple Lie group. We then present formulas for their Ricci curvature and scalar curvature.

\subsection{Naturally reductive metrics}\label{subsec_mf_nat_red}

Let $K<G$ be two non-trivial compact Lie groups with Lie algebras $\kg$ and~$\ggo$. It will be convenient for us to assume that $G$ is simple and $K\ne G$; however, see Remark~\ref{semisimple}. We can choose the negative of the Killing form as the background inner product on $\ggo$, which we denote by~$Q$. We have the $Q$-orthogonal splitting
\begin{equation}\label{deck} 
\ggo=\ag \oplus \zg(\kg) \oplus \kg_1 \oplus \cdots \oplus \kg_r,
\end{equation}
where $\ag$ is the complement $\kg^\perp$, $\zg (\kg)$ is the center of $\kg$, and $\kg_i$ are simple ideals of $\kg$.
%Recall that the centralizer  $\zg_\ag=\{X\in\ag\,|\,[X,Z]=0~\mbox{for all}~Z\in\kg\}$  is a subalgebra and hence $\kg\oplus\zg_\ag$ is a subalgebra of $\ggo$ as well. We can thus assume from now on that $\zg_\ag=\{0\}$ since   we can otherwise enlarge $K$. 
Consider a left-invariant metric $g$ on~$G$. We identify this metric with the inner product it induces on~$\ggo$. Assume that 
\begin{equation}\label{metric_gen}
g = \alpha Q|_{\ag} +h + \alpha_1 Q |_{\kg_1} + \cdots + \alpha_r Q |_{\kg_r}.
\end{equation}
Here, $h$ is an inner product on $\zg(\kg)$, and $\alpha,\alpha_1,\ldots,\alpha_r$ are positive constants. We can always diagonalize $h$ in a $Q$-orthonormal basis of~$\zg(\kg)$. This yields a further splitting
\begin{equation}\label{centre_dec}
\zg(\kg) = \kg_{r+1} \oplus \cdots \oplus \kg_{r+s}
\end{equation}
such that $\kg_{r+1},\ldots,\kg_{r+s}$ are all 1-dimensional and
\begin{equation}\label{metric}
g= \alpha Q|_{\ag} + \alpha_1 Q|_{\kg_1} + \cdots + \alpha_{r+s} Q|_{\kg_{r+s}}
\end{equation}
with positive $\alpha_{r+1},\ldots,\alpha_{r+s}$.

\begin{remark}
If the homogeneous space $G/K$ is strongly isotropy irreducible, i.e., the identity component $K_0$ of $K$ acts irreducibly on~$\ag$, then 
$$s=\dim\zg(\kg)=1,$$
as follows from the classification results in~\cite[Chapter~10,~\S6]{Helgason} and~\cite{Wlf84}. In the more general case where $K$, but not $K_0$, acts irreducibly, there are many examples with $s>1$; see~\cite{WZ91}.
\end{remark}

As proven in~\cite{DZ79}, a metric of the form~\eqref{metric} is  naturally reductive with respect to~$G\times K$. Indeed, one can identify $G$ with the quotient $(G\times K)/\Delta(K)$, where
$$\Delta K=\{(k,k)\mid k\in K\}\subset G\times K.$$
Observe that $g$ is $(G\times K)$-invariant since the action of $\Ad(K)$ on $\ggo$ is clearly by isometries of $Q$ and hence of~$g$. One can find an $\Ad(G)$-invariant non-degenerate bilinear form on $\ggo\oplus \kg$ such that its induced metric on $G=(G\times K)/\Delta K$ coincides with~$g$. Consequently,
\begin{align*}
%\label{eq_nr_def}
g([Z,X],Y) + g(X,[Z,Y]) =0
\end{align*}
for all $X, Y, Z $ in $\ggo\oplus \kg$ that are $Q$-orthogonal to the Lie algebra of~$\Delta(K)$. By definition, this means $g$ is naturally reductive with respect to $G\times K$. One of the main results of~\cite{DZ79} implies the converse, i.e.,  any left-invariant naturally reductive metric on a simple Lie group $G$ (regardless of the chosen description of $G$ as a homogeneous space) has the form~\eqref{metric} for some subgroup~$K$.

\subsection{The Ricci curvature}\label{secRicci}

We now describe the Ricci curvature of~$g$. Suppose 
$$n=\dim\ag, \qquad d_i=\dim\kg_i, \qquad i=1,\ldots,r+s.$$ 
Clearly, $d_{r+1}=\cdots=d_{r+s}=1$. 
If $B_i$ is the Killing form of~$\kg_i$, there exists a constant $\kappa_i$ such that
\begin{align*}
B_i = -\kappa_i Q|_{\kg_i}.
\end{align*}
Notice that $0\le \kappa_i\le 1$. Clearly, $\kappa_i=0$ if and only if $\kg_i$ lies in~$\zg(\kg)$. If $\kappa_i=1$, then $\kg_i$ is an ideal of $\ggo$, which is impossible since $G$ is simple. We can thus assume that $0<\kappa_i<1$ for $i=1,\ldots,r$ and $\kappa_{r+1}=\cdots=\kappa_{r+s}=0$. Finally, suppose 
\begin{align}%\label{deca}
A_i(X,Y) = -\sum_{j=1}^{d_i}Q([X,v_j],[Y,v_j])\quad \text{ for all } X,Y\in\ag,
\end{align}
where $v_j$ is a $Q$-orthonormal basis of $\kg_i$. In~\cite{DZ79}, one finds the following formulas.

\begin{proposition}[D'Atri--Ziller]\label{Ric} Let $g$ be a metric on $G$ as in~(\ref{metric}). Its Ricci curvature is given by 
	\begin{align*}\label{Ricci1}
	&\Ricci_{g}(\ag,\kg_i)=\Ricci_{g}(\kg_i,\kg_j)=0, & 1 & \leq i,j \leq r+s,~i\ne j,\notag\\ 
	&\Ricci_{g}|_{\kg_i} = \unc\left(\kappa_i\bigg(1 - \frac{\alpha_i^2}{\alpha^2}\bigg) + \frac{\alpha_i^2}{\alpha^2}\right)Q|_{\kg_i},   & 1 & \leq i \leq r+s, \notag\\ 
	&\Ricci_{g}|_{\ag} = \unm \sum_{i=1}^{r+s} \left(\frac{\alpha_i}{\alpha} -1\right)A_i  + \unc  Q|_{\ag}. &
	\end{align*}
\end{proposition}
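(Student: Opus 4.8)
The plan is to realise $(G,g)$ as the total space of a Riemannian submersion with totally geodesic fibres and to read off the curvature from O'Neill's formulas. Since the Ricci tensor is unchanged under scaling and $g=\alpha\big(Q|_\ag+\sum_i t_iQ|_{\kg_i}\big)$ with $t_i=\alpha_i/\alpha$, I may assume $\alpha=1$; the asserted formula indeed depends only on the ratios $\alpha_i/\alpha$. With $\alpha=1$ the projection $\pi\colon G\to G/K$ is a Riemannian submersion onto the normal homogeneous space $(G/K,g_0)$ induced by $Q$, the fibre through the identity being $K$ equipped with the bi-invariant metric $\sum_i t_iQ|_{\kg_i}$ (bi-invariant because the $\kg_i$ are ideals of $\kg$). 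The fibres are totally geodesic: for $U,V\in\kg$ and $X\in\ag$ one has $[X,U],[X,V]\in\ag$ since $\ag=\kg^\perp$ is $\Ad(K)$-invariant, so the horizontal part of $\nabla_UV$ vanishes. Hence O'Neill's tensor $T$ and the mean curvature of the fibres are zero, the clean O'Neill identities apply, and the remaining integrability tensor is $\aca_XY=\unm[X,Y]_\kg$ for horizontal $X,Y$.

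For the vertical directions, O'Neill gives $\Ricci_g(U,U)=\hat\Ricci(U,U)+\sum_a\|\aca_{X_a}U\|_g^2$, where $\hat\Ricci$ is the Ricci curvature of the fibre and $\{X_a\}$ is a $Q$-orthonormal basis of $\ag$. The fibre metric is bi-invariant, so $\hat\Ricci=-\unc B_K=\unc\sum_i\kappa_iQ|_{\kg_i}$, contributing $\unc\kappa_iQ|_{\kg_i}$ on $\kg_i$. Using the skew-symmetry of $\aca_X$ and $\aca_XY=\unm[X,Y]_\kg$, the second term becomes $\unc\sum_{X,Y\in\ag}\langle[X,Y]_\kg,U\rangle_g^2$, the fibre metric supplying the weight $t_i^2$ when $U\in\kg_i$. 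The decisive identity is that $Q=-B$ makes the Casimir of the adjoint representation the identity, so $\sum_{X,Y\in\ag}Q([X,Y],v)^2=\sum_{X\in\ag}|[v,X]|^2=\tr_\ag(-\ad_v^2)$; splitting $\tr_\ggo(-\ad_v^2)=Q(v,v)$ over $\ag\oplus\kg$ and using $[\kg_i,\kg_j]=0$ gives $\tr_\kg(-\ad_v^2)=\tr_{\kg_i}(-\ad_v^2)=\kappa_iQ(v,v)$, whence $\tr_\ag(-\ad_v^2)=(1-\kappa_i)Q(v,v)$. This turns the correction into $\unc t_i^2(1-\kappa_i)Q|_{\kg_i}$, and adding $\unc\kappa_iQ|_{\kg_i}$ yields the stated $\Ricci_g|_{\kg_i}$. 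The off-diagonal terms $\Ricci_g(\kg_i,\kg_j)$ with $i\ne j$ vanish: $\hat\Ricci$ has none, and the integrability contribution is proportional to $\tr_\ag(\ad_U\ad_V)$, which is zero because $\tr_\ggo(\ad_U\ad_V)=B(U,V)=0$ while $\tr_\kg(\ad_U\ad_V)=0$ as $\ad_U\ad_V|_\kg=0$ when $[\kg_i,\kg_j]=0$.

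For the horizontal directions, O'Neill gives $\Ricci_g(X,X)=\Ricci_{g_0}(X,X)-2\sum_a\|\aca_XX_a\|_g^2$. I compute the base term from the normal homogeneous sectional curvature formula $\langle R(X,Y)Y,X\rangle=\unc|[X,Y]_\ag|^2+|[X,Y]_\kg|^2$; tracing over $\{X_a\}$ and again splitting the adjoint Casimir into its $\ag$- and $\kg$-parts (which give $\sum_a|[X,X_a]_\ag|^2=Q(X,X)+2\sum_iA_i(X,X)$ and $\sum_a|[X,X_a]_\kg|^2=-\sum_iA_i(X,X)$) yields $\Ricci_{g_0}|_\ag=\unc Q|_\ag-\unm\sum_iA_i$. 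The integrability correction is $-2\sum_a\|\aca_XX_a\|_g^2=\unm\sum_i t_iA_i$ by the same bracket bookkeeping, the vertical $g$-norm supplying the weight $t_i$. Summing, the terms $-\unm\sum_iA_i$ and $+\unm\sum_i t_iA_i$ combine into $\unm\sum_i(t_i-1)A_i$, which is the stated $\Ricci_g|_\ag$. Finally $\Ricci_g(\ag,\kg_i)=0$ because the O'Neill mixed Ricci for totally geodesic fibres reduces to a divergence of $\aca$ that vanishes for this homogeneous submersion.

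The main obstacle is not conceptual but the bookkeeping that channels every contribution through the single Casimir identity $\sum_\lambda|[X,e_\lambda]|^2=Q(X,X)$: one must consistently record which traces are taken over $\ag$ versus $\kg$, keep the fibre weights $t_i$ attached to the correct vertical factors of the O'Neill corrections, and check that the normal homogeneous base term and the integrability term recombine with exactly the coefficients $\kappa_i$, $1-\kappa_i$ and $t_i-1$. The one genuinely geometric subtlety — whether the fibres remain totally geodesic under the unequal rescalings $t_i$ — is disposed of at the outset by $[\ag,\kg]\subseteq\ag$, so no second fundamental form terms ever enter.
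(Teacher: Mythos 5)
The paper offers no proof of this proposition at all --- it is quoted from D'Atri--Ziller \cite{DZ79} --- so your O'Neill-submersion derivation is a self-contained alternative rather than a retracing of the paper's argument. Judged on its own merits, the two diagonal blocks are handled correctly: the reduction to $\alpha=1$, the total geodesy of the fibres via $[\ag,\kg]\subset\ag$, the identification of the fibre Ricci with $\unc\kappa_i Q|_{\kg_i}$, the weight bookkeeping that turns the $\aca$-tensor term into $\unc t_i^2(1-\kappa_i)Q|_{\kg_i}$, the recombination $\unc Q|_\ag-\unm\sum_iA_i+\unm\sum_it_iA_i=\unc Q|_\ag+\unm\sum_i(t_i-1)A_i$ on the horizontal block, and the vanishing of $\Ricci_g(\kg_i,\kg_j)$ for $i\ne j$ all check out against the stated formulas.

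The gap is the remaining formula, $\Ricci_g(\ag,\kg_i)=0$, which you dispose of in one sentence: the mixed Ricci term ``reduces to a divergence of $\aca$ that vanishes for this homogeneous submersion.'' No such general principle exists. For a homogeneous submersion with totally geodesic fibres, the mixed Ricci term is an $\Ad(K)$-invariant pairing between $\ag$ and $\kg$, and invariance alone forces it to vanish only when $\ag$ and $\kg$ share no equivalent $\Ad(K)$-submodules. In the generality of this proposition they may well share some: for $K=\SU(2)$ embedded in $G=\SU(4)$ as $2\oplus 2$, the module $\ag$ contains three copies of the adjoint representation of $K$. So this step must actually be computed. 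The computation is short with the connection data you already have: for $X,Y\in\ag$ and $U\in\kg_i$ one finds $\nabla_XY=\unm[X,Y]$ and $\nabla_XU=\tfrac{t_i}{2}[X,U]$, whence $\sum_a(\nabla_{X_a}\aca)_{X_a}X=-\unc\sum_a[X_a,[X_a,X]_\ag]_\kg$ and therefore, up to the sign convention in the mixed O'Neill formula,
\begin{equation*}
\Ricci_g(X,U)=\pm\tfrac{t_i}{4}\sum_aQ([X,X_a],[U,X_a])
=\mp\tfrac{t_i}{4}\tr\big(\pr_\ag\circ(\ad_X\ad_U)|_\ag\big).
\end{equation*}
Now the same trace-splitting device you use elsewhere closes the argument: since $\ad_X\ad_U$ maps $\kg$ into $\ag$ (again because $[\ag,\kg]\subset\ag$), its $\kg$-block is traceless, so $\tr\big(\pr_\ag\circ(\ad_X\ad_U)|_\ag\big)=\tr_{\ggo}(\ad_X\ad_U)=-Q(X,U)=0$. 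The conclusion you assert is true, but what kills the mixed term is this trace identity, not homogeneity; as written, your proof of the first formula of the proposition is missing.
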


Let
$$\ag=\ag_1\oplus\cdots \oplus \ag_l$$
be a decomposition of $\ag$ into irreducible $\Ad(K)$-modules. Since $A_i$ is $\Ad(K)$-invariant, the restriction $A_i |_{\ag_j}$ equals $a_{ij} Q|_{\ag_j}$ for some constant~$a_{ij}$. This means $\Ricci_{g}|_{\ag_j}$ can be a different multiple of $Q|_{\ag_j}$ for each $j=1,\cdots,l$. Furthermore, $\Ricci_g(\ag_i,\ag_j)$ can be non-zero if $i\ne j$ and the $\Ad(K)$-modules $\ag_i$ and $\ag_j$ are equivalent. On the other hand, the restriction $g|_\ag$ is a multiple of~$Q$. This shows that the equation $\Ricci_g=cT$ is highly overdetermined unless $\ag$ is $\Ad(K)$-irreducible. One does not, in general, expect it to have solutions in this case. It is, therefore, natural for us to assume that $\ag$ is $\Ad(K)$-irreducible. Then 
$$A_i=- \frac{d_i(1-\kappa_i)}n Q|_{\ag}$$
(see~\cite[pages~34 and~46]{DZ79}), and the third equation in Proposition~\ref{Ric} simplifies to
\begin{equation}\label{Rica_irred}
\Ricci_{g}|_{\ag} = \bigg(-\unm \sum_{i=1}^{r+s} \left(\frac{\alpha_i}{\alpha}-1\right)\frac{d_i(1-\kappa_i)}n  + \unc \bigg) Q|_{\ag}.
\end{equation}

We will also need the following formula for the scalar curvature of~$g$. Due to Proposition~\ref{Ric} and an equality on page~34 of~\cite{DZ79}, we have:

\begin{corollary}\label{scalar}
If $g$ be a metric on $G$ as in~(\ref{metric}), then its scalar curvature is given by
	\begin{equation*}
	S_{g} = -\unc \sum_{i=1}^{r+s} \frac{\alpha_i}{\alpha^2} d_i (1-\kappa_i) + \unm \sum_{i=1}^{r+s} \frac{d_i (1-\kappa_i)}{\alpha} + \frac{n}{4 \alpha}+ \unc \sum_{i=1}^{r} \frac{\kappa_i d_i}{\alpha_i}.
	\end{equation*} 
\end{corollary}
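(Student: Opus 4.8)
The plan is to read off $S_g$ as the $g$-trace of the Ricci tensor, $S_g=\tr_g\Ricci_g$, using the block structure recorded in Proposition~\ref{Ric}. Since the splitting $\ggo=\ag\oplus\kg_1\oplus\cdots\oplus\kg_{r+s}$ is $Q$-orthogonal, since $g$ restricts to $\alpha\,Q$ on $\ag$ and to $\alpha_i\,Q$ on $\kg_i$, and since $\Ricci_g$ vanishes on all the mixed blocks by the first line of Proposition~\ref{Ric}, the $g$-trace decomposes as
\[
S_g=\frac1\alpha\,\tr_Q\big(\Ricci_g|_{\ag}\big)+\sum_{i=1}^{r+s}\frac1{\alpha_i}\,\tr_Q\big(\Ricci_g|_{\kg_i}\big),
\]
where $\tr_Q$ denotes the trace with respect to the background form $Q$. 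Thus the computation reduces to evaluating these two kinds of traces and adding them.

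For the summands $\kg_i$, the second line of Proposition~\ref{Ric} expresses $\Ricci_g|_{\kg_i}$ as an explicit multiple of $Q|_{\kg_i}$, so $\tr_Q(\Ricci_g|_{\kg_i})$ is $d_i$ times that multiple. Rewriting $\kappa_i\big(1-\tfrac{\alpha_i^2}{\alpha^2}\big)+\tfrac{\alpha_i^2}{\alpha^2}=\kappa_i+(1-\kappa_i)\tfrac{\alpha_i^2}{\alpha^2}$ and dividing by $\alpha_i$, each such block contributes $\tfrac{d_i\kappa_i}{4\alpha_i}+\tfrac{d_i(1-\kappa_i)\alpha_i}{4\alpha^2}$; summing over $i$ and using $\kappa_{r+1}=\cdots=\kappa_{r+s}=0$ collapses the first terms to $\unc\sum_{i=1}^{r}\frac{\kappa_id_i}{\alpha_i}$ and leaves $\unc\sum_{i=1}^{r+s}\frac{\alpha_i}{\alpha^2}d_i(1-\kappa_i)$ from the rest. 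For the $\ag$-block I would take the $Q$-trace of the third line of Proposition~\ref{Ric}, $\Ricci_g|_{\ag}=\unm\sum_i(\tfrac{\alpha_i}{\alpha}-1)A_i+\unc Q|_{\ag}$. The only non-elementary ingredient is the value of $\tr_Q A_i$; this is exactly the equality on page~34 of~\cite{DZ79}, namely $\tr_Q A_i=-d_i(1-\kappa_i)$. One sees it directly: with $v_1,\dots,v_{d_i}$ a $Q$-orthonormal basis of $\kg_i$ and $e_1,\dots,e_n$ one of $\ag$, we have $\tr_Q A_i=-\sum_{j,k}Q([v_k,e_j],[v_k,e_j])$, and decomposing the identity $-\tr\ad(v_k)^2=Q(v_k,v_k)=1$ along the $Q$-orthogonal, $\ad(\kg)$-invariant splitting $\ggo=\ag\oplus\kg$ separates the $\ag$-part from the Killing form of $\kg_i$, which equals $-\kappa_i Q|_{\kg_i}$; this yields $\sum_{j}Q([v_k,e_j],[v_k,e_j])=1-\kappa_i$ for each $k$. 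Granting this, $\tr_Q(\Ricci_g|_{\ag})=-\unm\sum_i(\tfrac{\alpha_i}{\alpha}-1)d_i(1-\kappa_i)+\tfrac n4$, and dividing by $\alpha$ gives $-\unm\sum_i\frac{\alpha_i}{\alpha^2}d_i(1-\kappa_i)+\unm\sum_i\frac{d_i(1-\kappa_i)}{\alpha}+\frac n{4\alpha}$.

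Adding the $\ag$-contribution to the $\kg_i$-contribution is then pure bookkeeping: the two $\frac{\alpha_i}{\alpha^2}d_i(1-\kappa_i)$ terms combine with coefficients $-\unm$ and $+\unc$ into $-\unc$, and the remaining terms already appear in the stated shape, producing precisely the formula of Corollary~\ref{scalar}. I expect no real obstacle beyond the trace identity $\tr_Q A_i=-d_i(1-\kappa_i)$; everything else is algebraic simplification. I would also remark that, in contrast to~\eqref{Rica_irred}, this argument does not require $\ag$ to be $\Ad(K)$-irreducible, since only the $Q$-trace of $A_i$ over all of $\ag$ enters the scalar curvature.
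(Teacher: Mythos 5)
Your proof is correct and takes essentially the same route as the paper: the paper's justification of Corollary~\ref{scalar} is precisely to trace the block-diagonal formulas of Proposition~\ref{Ric} against $g$, quoting the identity $\tr_Q A_i=-d_i(1-\kappa_i)$ from page~34 of~\cite{DZ79}, which is exactly what you do. The only difference is that you additionally supply a self-contained derivation of that trace identity (via the splitting of the Killing form along $\ggo=\ag\oplus\kg$) instead of citing it, and your closing remark that no irreducibility of $\Ad(K)|_{\ag}$ is needed is accurate and consistent with how the corollary is stated and used.
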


%\bigskip

\section{The prescribed Ricci curvature problem}\label{sec_results}

Let $\mathcal M_K$ be the set of left-invariant metrics on $G$ satisfying~\eqref{metric_gen} for some inner product $h$ on $\zg(\kg)$ and some positive constants $\alpha,\alpha_1,\ldots,\alpha_r$. As explained above, $g$ lies in $\mathcal M_K$ if and only if $g$ is naturally reductive with respect to~$G\times K$. Our objective in this section is to obtain conditions on $T\in\mathcal M_K$ that ensure the existence of $g\in\mathcal M_K$ such that
\begin{equation}\label{prescribed}
\Ricci_g = cT
\end{equation}
for some $c>0$. Most of our results assume that $\Ad(K)$ acts irreducibly on $\ag$. Otherwise, as explained above, one does not expect such a $g$ to exist. 

Denote by $\mathcal M_K^{\mathcal D}$ the set of metrics in $\mathcal M_K$ that are diagonal with respect to a decomposition $\mathcal D$ of the form~\eqref{centre_dec}. Every $g\in\mathcal M_K^{\mathcal D}$ satisfies~\eqref{metric} for some positive $\alpha,\alpha_1,\ldots,\alpha_{r+s}$. According to Proposition \ref{Ric}, the Ricci curvature of a metric in $\mathcal M_K^{\mathcal D}$ must be diagonal with respect to~$\mathcal D$ as well. Thus, if~\eqref{prescribed} holds for $g\in\mathcal M_K$ and $T\in\mathcal M_K$, then one can find $\mathcal D$ such that both $g$ and $T$ lie in~$\mathcal M_K^{\mathcal D}$.  

\subsection{The variational principle}\label{subsec_var}

In \cite{AP16}, the second-named author showed that solutions to the prescribed Ricci curvature problem on homogeneous spaces could be characterized as critical points of the scalar curvature functional subject to a constraint. We will now prove an analogous fact in our situation. Given $T\in\mathcal{M}^{\dca}_ K$, define
\begin{align}\label{MKTD_def}
\mathcal{M}^{\dca}_{K,T}=\big\{g \in \mathcal{M}^{\dca}_ K\,\big|\,\tr_{g}T=1\big\},
\end{align}
where $\tr_{g}T$ is the trace of $T$ with respect to $g$. In what follows, we view the scalar curvature as a functional $S:\mathcal M_K\to\mathbb R$.

\begin{proposition}\label{variational_lemma}
Assume $\Ad(K)|_{\ag}$ is irreducible. Then a metric $g \in\mathcal{M}^{\dca}_{K,T}$ satisfies (\ref{prescribed}) for some $c \in \RR$ if and only if it is a critical point of~$S |_{\mathcal{M}^{\dca}_{K,T}}$.
\end{proposition}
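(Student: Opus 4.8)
The plan is to reduce the prescribed-Ricci equation to a constrained critical-point problem via the method of Lagrange multipliers, exploiting the fact that irreducibility of $\Ad(K)|_{\ag}$ forces both $g$ and its Ricci tensor to be diagonal with respect to $\dca$, so that the whole problem becomes finite-dimensional in the parameters $\alpha,\alpha_1,\ldots,\alpha_{r+s}$. First I would observe that, by Proposition~\ref{Ric} together with~\eqref{Rica_irred}, the Ricci tensor $\Ricci_g$ of any $g\in\mathcal{M}^{\dca}_K$ is again diagonal with respect to $\dca$, i.e.\ $\Ricci_g$ is itself an element of (the tangent space to) $\mathcal{M}^{\dca}_K$. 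Hence equation~\eqref{prescribed} is an identity between two diagonal tensors, equivalent to $r+s+1$ scalar equations relating the eigenvalues. The key structural input is a first-variation formula: for a smooth curve $g_t\in\mathcal{M}^{\dca}_K$ with $g_0=g$ and $\dot g_0=\tfrac{\dd}{\dd t}\big|_{t=0}g_t$, one has
\begin{align}\label{firstvar}
\ddt\Big|_{t=0} S_{g_t} = -\tfrac12\,g\big(\Ricci_g,\dot g_0\big),
\end{align}
which is the standard variation of total scalar curvature restricted to our finite-dimensional family; here $g(\cdot,\cdot)$ denotes the natural pairing induced by $g$ on symmetric $2$-tensors. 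I would derive~\eqref{firstvar} directly by differentiating the formula in Corollary~\ref{scalar}, using that on each diagonal block the pairing is explicit.

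Next I would compute the differential of the constraint. Writing $F(g)=\tr_g T$, a similar direct differentiation gives
\begin{align}\label{constrvar}
\ddt\Big|_{t=0}\tr_{g_t}T = -\,g\big(T,\dot g_0\big),
\end{align}
since differentiating $\tr_g T=\sum T(e_i,e_i)$ in a $g$-orthonormal frame produces exactly the pairing of $T$ against $-\dot g_0$. The Lagrange multiplier principle then says that $g\in\mathcal{M}^{\dca}_{K,T}$ is a critical point of $S|_{\mathcal{M}^{\dca}_{K,T}}$ precisely when the differential of $S$ annihilates every variation $\dot g_0$ tangent to the constraint set, i.e.\ every $\dot g_0$ with $g(T,\dot g_0)=0$. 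By~\eqref{firstvar} and~\eqref{constrvar} this is equivalent to the existence of a scalar $\lambda$ with $g(\Ricci_g,\dot g_0)=\lambda\,g(T,\dot g_0)$ for all tangent $\dot g_0$. Because the tangent space of $\mathcal{M}^{\dca}_K$ at $g$ consists of \emph{all} diagonal symmetric tensors (the parameters $\alpha,\alpha_i$ vary freely over positive reals), the pairing $g(\cdot,\cdot)$ is non-degenerate on this space, so the last condition forces $\Ricci_g=\lambda T$ as tensors in $\mathcal{M}^{\dca}_K$. Setting $c=\lambda$ gives~\eqref{prescribed}; conversely, if~\eqref{prescribed} holds then the same identity shows the Lagrange condition is met.

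\textbf{The main obstacle} I anticipate is verifying that the variational identities~\eqref{firstvar} and~\eqref{constrvar} hold with the correct normalisation within the restricted family $\mathcal{M}^{\dca}_K$, rather than over the full space of left-invariant metrics. The subtlety is that a priori the first-variation formula for scalar curvature produces a pairing against the Ricci tensor only after integrating against the whole tangent space of metrics; one must check that restricting variations to the diagonal family does not lose any critical-point information. Here the irreducibility hypothesis is essential: it guarantees via~\eqref{Rica_irred} that $\Ricci_g|_{\ag}$ is a single multiple of $Q|_{\ag}$, so that $\Ricci_g$ stays inside $\mathcal{M}^{\dca}_K$ and no off-diagonal components appear that could pair non-trivially with external variations. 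I would make this precise by noting that $\mathcal{M}^{\dca}_K$ is a totally geodesic (indeed linear, in the parameters) slice on which both $S$ and $\tr_g T$ restrict to genuine smooth functions, and that the gradient of $S$ computed intrinsically on this slice equals the $g$-orthogonal projection of $-\tfrac12\Ricci_g$ onto the tangent space — which, by the diagonality just noted, equals $-\tfrac12\Ricci_g$ itself. The remaining steps are then the routine identification of the multiplier $\lambda$ with the constant $c$ and the observation that $c$ need not be positive at this stage, matching the statement's allowance of $c\in\RR$.
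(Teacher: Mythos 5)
Your proposal is correct and is essentially the paper's own proof: both arguments run a Lagrange-multiplier (gradient) argument on the finite-dimensional diagonal slice $\mathcal{M}^{\dca}_{K}$, with the decisive input in each case being that irreducibility of $\Ad(K)|_{\ag}$ makes $\Ricci_g$ itself an element of $\mathcal{T}^{\dca}_{K}$ (Proposition~\ref{Ric}), so that orthogonality of $\Ricci_g$ to the hyperplane $\big\{h\in\mathcal{T}^{\dca}_{K} \,\big|\, \langle T,h\rangle_g=0\big\}$ upgrades to the full tensor equation~\eqref{prescribed}. The only difference in route is the source of the first-variation identity: the paper quotes $d\hat S_u(v)=-\langle\Ricci_u,v\rangle$ from~\cite{AP16} on the full space of invariant metrics and observes that the gradient $-\Ricci_g$ is tangent to the slice, whereas you derive the identity intrinsically by differentiating the explicit formula of Corollary~\ref{scalar}; this is a legitimate, more self-contained alternative, and the computation does close (for instance, $\partial S/\partial\alpha$ equals $-\langle \Ricci_g, Q|_{\ag}\rangle_g$ upon comparison with the $\ag$-component in Proposition~\ref{Ric}, and likewise for each $\partial S/\partial\alpha_i$). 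One slip: the correct coefficient in your first-variation formula is $-1$, not $-\tfrac12$, as that differentiation shows; this is harmless because the Lagrange argument only uses the gradient up to a nonzero constant factor, but with your normalisation the multiplier would satisfy $c=2\lambda$ rather than $c=\lambda$.
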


\begin{proof}
Let $\mathcal{M}$ and $\mathcal{T}$ be the set of $\hat G$-invariant metrics and the set of $\hat G$-invariant symmetric (0,2)-tensor fields on a general homogeneous space $\hat G/\hat K$. Clearly, the tangent space $T_u\mathcal{M}$ coincides with~$\mathcal{T}$ for any~$u\in\mathcal M$. The differential of the scalar curvature functional $\hat S:\mathcal M\to\mathbb R$ is 
\begin{align}\label{dif_scal}
d\hat S_u(v)=-\langle\Ricci_u,v\rangle,\qquad u\in\mathcal M,~v\in\mathcal T,
\end{align}
where the angular brackets denote the inner product on $\mathcal T$ induced by~$u$; see~\cite[page~277]{AP16}. Consequently, the gradient of this functional equals~$-\Ricci_u$ at~$u\in\mathcal M$.

Let ${\mathcal{T}^{\dca}_{K}}$ be the space of left-invariant (0,2)-tensor fields satisfying~\eqref{metric} for some constants $\alpha,\alpha_1,\ldots,\alpha_{r+s}$ (not necessarily positive). The tangent space $T_g {\mathcal{M}^{\dca}_{K}}$ coincides with $\mathcal{T}^{\dca}_{K}$ for any~$g\in\mathcal{M}^{\dca}_{K}$. Proposition~\ref{Ric} implies that $\Ricci_g$ lies in $\mathcal{T}^{\dca}_{K}$ for every~$g\in \mathcal{M}^{\dca}_{K}$.
Consequently, the gradient of $S|_{\mathcal{M}^{\dca}_{K}}$ is tangent to ${\mathcal{M}^{\dca}_{K}}$. The claim now follows from~\eqref{dif_scal} and the equality 
$$T_g {\mathcal{M}^{\dca}_{K,T}}=\big\{h\in {\mathcal{T}^{\dca}_{K}} \,\big|\, \langle T,h\rangle=0\big\},$$
where the angular brackets denote the inner product on ${\mathcal{T}^{\dca}_{K}}$ induced by~$g$.
\end{proof}

\begin{remark}
The variational characterization in Proposition~\ref{variational_lemma} does not hold if we allow $\Ad(K)|_{\ag}$ to be reducible since in this case $\Ricci_g$ is not necessarily tangent to~$\mathcal{M}^{\dca}_{K}$. The proof also shows that solutions to~(\ref{prescribed}) on ${\mathcal{M}_{K}}$ are again critical points of $S |_{\mathcal{M}_{K,T}}$, where $\mathcal{M}_{K,T}=\big\{g \in \mathcal{M}_K\,\big|\,\tr_{g}T=1\big\}$, if $K$ acts irreducibly. However, in Proposition~\ref{variational_lemma}, we diagonalize both $g$ and $T$ on~$\zg(\ggo)$.
\end{remark}

\subsection{The sufficient condition}\label{suf}

Fix $T\in\mathcal M_K$. The formula
\begin{equation}\label{T_gen}
T = T_{\ag} Q|_{\ag} + w + T_1 Q|_{\kg_1}  + \cdots + T_r Q|_{\kg_r}
\end{equation}
holds for some inner product $w$ on $\zg(\kg)$ and some positive constants $T_{\ag}, T_1 ,\ldots, T_r$. We can diagonalise $T$ with respect to a decomposition $\mathcal D$ of the form~\eqref{centre_dec}. Then $T$ lies in $\mca_K^{\dca}$ and
\begin{equation}\label{T}
T = T_{\ag} Q|_{\ag}   + T_1 Q|_{\kg_1}  + \cdots + T_{r+s} Q|_{\kg_{r+s}}
\end{equation}
for positive constants $T_{\ag}, T_1 ,\ldots, T_{r+s}$. Our next result shows that a simple inequality guarantees the existence of $g\in\mca_{K,T}^{\dca}$ satisfying~\eqref{prescribed}. The proof follows the strategy developed in~\cite{MGAP18,AP19}. We denote
\begin{align*}
d=n+d_1+\cdots+d_{r+s}=\dim\ggo.
\end{align*}

\begin{theorem}\label{sufcon}
Let $\Ad(K)|_{\ag}$ be irreducible. Consider a left-invariant naturally reductive metric $T \in \mathcal{M}_K^\dca$ satisfying~(\ref{T}). Choose an index $m$ such that 
\begin{align*}
\frac{\kappa_m}{T_m}=\max_{i=1,\ldots,r} \frac {\kappa_i}{T_i}.
\end{align*}
If 
\begin{align}\label{hyp_thm}
\frac{\kappa_m\tr_Q T}{T_m}< d + d_m - \kappa_md_m,
\end{align}
then the functional $S |_{\mathcal{M}^{\dca}_{K,T}}$ attains its global maximum at some $g_{\max} \in\mathcal{M}^{\dca}_{K,T}$. 
\end{theorem}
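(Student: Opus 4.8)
The plan is to realise $\sigma^\ast:=\sup_{\mca^{\dca}_{K,T}}S$ as an honest maximum by showing it is finite, strictly larger than a distinguished boundary value, and unattainable along sequences that escape to the boundary. First I would reparametrize. A metric in $\mca^{\dca}_{K,T}$ is determined by $(\alpha,\alpha_1,\dots,\alpha_{r+s})$ subject to $\tr_gT=\frac{T_{\ag}n}{\alpha}+\sum_{i=1}^{r+s}\frac{T_id_i}{\alpha_i}=1$. Setting $x=\tfrac1\alpha$ and $x_i=\tfrac1{\alpha_i}$ turns this into the linear relation $T_{\ag}n\,x+\sum_i T_id_i\,x_i=1$ with $x,x_i>0$, so that $\mca^{\dca}_{K,T}$ is identified with the relative interior $\Sigma^\circ$ of a compact simplex $\Sigma$. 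In these coordinates Corollary~\ref{scalar} reads
\begin{align*}
S=-\unc\sum_{i=1}^{r+s}\frac{x^2}{x_i}\,d_i(1-\kappa_i)+\unm\,x\sum_{i=1}^{r+s}d_i(1-\kappa_i)+\frac n4\,x+\unc\sum_{i=1}^{r}\kappa_id_i\,x_i.
\end{align*}
Since the constraint forces $x\le\frac1{T_{\ag}n}$ and $x_i\le\frac1{T_id_i}$ while the only negative term is nonpositive, $S$ is bounded above, so $\sigma^\ast<\infty$.

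Next I would analyse the boundary. Because $1-\kappa_i>0$ for every $i$, the term $-\frac{x^2}{x_i}d_i(1-\kappa_i)$ forces $S\to-\infty$ as one approaches any face $\{x_i=0\}$ with $x>0$; such faces cannot support a maximum. The only competitive boundary stratum is $\{x=0\}$, i.e. $\alpha\to\infty$, where the singular and $x$-linear terms vanish and $S$ degenerates to the linear functional $\unc\sum_{i=1}^r\kappa_id_i\,x_i$. Using $\frac{\kappa_i}{T_i}\le\frac{\kappa_m}{T_m}$ together with $\sum_iT_id_i\,x_i=1$, this functional is bounded above by $\unc\frac{\kappa_m}{T_m}$. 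A short $\limsup$ argument then shows that for any sequence in $\Sigma^\circ$ converging to a point of $\partial\Sigma$, the value of $S$ tends to $-\infty$ or to a quantity at most $\unc\frac{\kappa_m}{T_m}$.

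The crux, and the step I expect to be the main obstacle, is to produce an interior metric with $S>\unc\frac{\kappa_m}{T_m}$; the hypothesis~\eqref{hyp_thm} should be exactly what makes this possible. I would test the two-parameter family obtained by setting $\alpha_i=\alpha$ for all $i\ne m$ and $\alpha_m=\beta$. A direct computation collapses the scalar curvature to
\begin{align*}
S=\frac{d+d_m-2\kappa_md_m}{4\alpha}-\frac{d_m(1-\kappa_m)}{4}\,\frac{\beta}{\alpha^2}+\frac{\kappa_md_m}{4\beta},
\end{align*}
with constraint $\frac{\tr_Q T-T_md_m}{\alpha}+\frac{T_md_m}{\beta}=1$. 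As $\alpha\to\infty$ the family approaches the boundary point of value $\unc\frac{\kappa_m}{T_m}$. Writing $u=\tfrac1\alpha$ and using the constraint to express $\tfrac1\beta$ as a function of $u$, I would differentiate and find
$$\frac{dS}{du}\Big|_{u=0}=\tfrac14\Big(d+d_m-\kappa_md_m-\tfrac{\kappa_m\tr_Q T}{T_m}\Big),$$
which is strictly positive precisely under~\eqref{hyp_thm}. Hence for $\alpha$ large but finite the family contains an interior metric with $S>\unc\frac{\kappa_m}{T_m}$, so $\sigma^\ast>\unc\frac{\kappa_m}{T_m}$.

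Finally I would close the argument by compactness. Taking a maximizing sequence $g_k\in\Sigma^\circ$ with $S(g_k)\to\sigma^\ast$ and passing to a subsequence converging in the compact simplex $\Sigma$ to a point $p$, the boundary analysis excludes $p\in\partial\Sigma$: there $\limsup S(g_k)$ is either $-\infty$ or at most $\unc\frac{\kappa_m}{T_m}<\sigma^\ast$. Therefore $p\in\Sigma^\circ=\mca^{\dca}_{K,T}$, and continuity of $S$ on $\Sigma^\circ$ gives $S(p)=\sigma^\ast$. Thus $g_{\max}=p$ realises the global maximum, as claimed.
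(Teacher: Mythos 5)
Your proof is correct, and its core is the same as the paper's: your two-parameter test family ($\alpha_i=\alpha$ for $i\ne m$, $\alpha_m=\beta$ determined by the trace constraint) is exactly the curve $g_t$ constructed in the paper's proof, and your derivative $\tfrac{dS}{du}\big|_{u=0}=\tfrac14\big(d+d_m-\kappa_md_m-\kappa_m\tr_QT/T_m\big)$ is, after the substitution $u=1/t$ (which flips the sign via $t^2\tfrac{d}{dt}=-\tfrac{d}{du}$), precisely the limit computed in~\eqref{t2St}; in both arguments hypothesis~\eqref{hyp_thm} enters only at this one point, to make the interior value beat~$\tfrac{\kappa_m}{4T_m}$. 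Where you genuinely differ is in how the non-compactness of $\mathcal{M}^{\dca}_{K,T}$ is tamed. The paper proves a quantitative escape estimate (Lemma~\ref{lemmacompact}), exhibiting explicit compact sets $\cca_\epsilon$ with explicit thresholds $\Gamma_\ag(\epsilon),\Gamma_i(\epsilon)$ outside of which $S<\tfrac{\kappa_m}{4T_m}+\epsilon$, and then maximizes over~$\cca_\epsilon$. You instead pass to reciprocal coordinates $x=1/\alpha$, $x_i=1/\alpha_i$, under which the constraint $\tr_gT=1$ becomes linear and the domain becomes the relative interior of a compact simplex; the escape analysis turns into a $\limsup$ estimate over boundary strata (faces $x_i=0$ with $x>0$ give $-\infty$; the face $x=0$ gives at most $\tfrac{\kappa_m}{4T_m}$ by the maximality of $\kappa_m/T_m$ and the constraint), followed by a routine maximizing-sequence argument. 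Your approach is softer and arguably cleaner---no explicit constants---though it requires the small care you do exercise at corner strata where $x\to0$ and $x_i\to0$ simultaneously, where the singular term $-\tfrac{x^2}{x_i}d_i(1-\kappa_i)$ need not vanish but is at least nonpositive. The two routes are logically equivalent; the paper's quantitative lemma has the side benefit of being reusable as a standalone estimate (it is invoked again in Example~\ref{example_funny_max}), which a purely qualitative compactification argument does not provide.
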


The proof of Theorem~\ref{sufcon} requires the following estimate for $S |_{\mathcal{M}^{\dca}_{K,T}}$.

\begin{lemma}\label{lemmacompact}
Given $\epsilon > 0$, there exists a compact set $\mathcal C_\epsilon\subset\mathcal{M}^{\dca}_{K,T}$ such that
\begin{equation}\label{estimate}
S_g < \frac{\kappa_m}{4T_m} + \epsilon
\end{equation}
for every $g \in \mathcal{M}^{\dca}_{K,T} \setminus\cca_\epsilon$.
\end{lemma}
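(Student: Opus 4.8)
The plan is to reduce the statement to an elementary analysis of a rational function on an open orthant. First I would introduce the coordinates $x_i=\alpha_i/\alpha$ for $i=1,\dots,r+s$ and use the constraint $\tr_gT=1$ to eliminate the overall scale. Since $g$ and $T$ are both diagonal with respect to $\dca$, the constraint reads $\tfrac{nT_{\ag}}{\alpha}+\sum_{i=1}^{r+s}\tfrac{d_iT_i}{\alpha_i}=1$, which upon substituting $\alpha_i=x_i\alpha$ gives $\alpha=D(x):=nT_{\ag}+\sum_{i=1}^{r+s}\tfrac{d_iT_i}{x_i}$. The map $x\mapsto(\alpha,\alpha_1,\dots,\alpha_{r+s})$ is then a homeomorphism of $(0,\infty)^{r+s}$ onto $\mca^{\dca}_{K,T}$, so a point escapes every compact subset of $\mca^{\dca}_{K,T}$ precisely when some $x_i\to0$ or $x_i\to\infty$. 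Rewriting Corollary~\ref{scalar} in these coordinates and factoring out $1/\alpha=1/D(x)$, I would express the scalar curvature as $S_g=N(x)/D(x)$, where $N(x)=-\unc\sum_i x_id_i(1-\kappa_i)+\unm\sum_i d_i(1-\kappa_i)+\tfrac n4+\unc\sum_{i=1}^r\tfrac{\kappa_id_i}{x_i}$.

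The crucial algebraic step is to form the difference $\tfrac{\kappa_m}{4T_m}D(x)-N(x)$ and show that it equals $P(x)+C$, where $C$ is a constant and
\[
P(x)=\unc\sum_{i=1}^{r+s} x_id_i(1-\kappa_i)+\sum_{i=1}^{r}\frac{d_iT_i}{4x_i}\Big(\frac{\kappa_m}{T_m}-\frac{\kappa_i}{T_i}\Big)+\sum_{i=r+1}^{r+s}\frac{\kappa_md_iT_i}{4T_mx_i}.
\]
This $P(x)$ is manifestly nonnegative: the $x_i$-coefficients are nonnegative because $\kappa_i\le1$, and each bracket $\tfrac{\kappa_m}{T_m}-\tfrac{\kappa_i}{T_i}$ is nonnegative by the defining choice of $m$. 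Since $D(x)>0$, the target estimate $S_g<\tfrac{\kappa_m}{4T_m}+\epsilon$ is then equivalent to $P(x)+\epsilon D(x)+C>0$.

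To finish, I would observe that $P(x)+\epsilon D(x)$ equals a constant plus $\unc\sum_i d_i(1-\kappa_i)\,x_i+\sum_i \tfrac{d_iT_i}{x_i}c_i(\epsilon)$, in which every coefficient of $x_i$ and of $1/x_i$ is strictly positive. Such a function is a proper exhaustion of $(0,\infty)^{r+s}$, tending to $+\infty$ as any $x_i\to0$ or $x_i\to\infty$, so the sublevel set on which $P(x)+\epsilon D(x)+C\le0$ is compact. Declaring $\cca_\epsilon$ to be this sublevel set (transported back to $\mca^{\dca}_{K,T}$ via the homeomorphism above) yields the required compact set, outside of which \eqref{estimate} holds.

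The argument is not deep, but it hinges on two positivity facts, and the role of $\epsilon$ is the only subtle point. Without $\epsilon$, the function $P(x)$ alone fails to control the direction $x_m\to0$: its $1/x_m$-coefficient is exactly zero (the bracket vanishes for $i=m$), reflecting the fact that $S_g\to\tfrac{\kappa_m}{4T_m}$ precisely along that ray. The term $\epsilon D(x)$ is exactly what supplies the missing strict positivity in the $1/x_m$ slot, making the exhaustion proper in all $r+s$ coordinate directions. Thus the main thing to verify carefully is the bookkeeping in the identity $\tfrac{\kappa_m}{4T_m}D-N=P+C$ together with the strict positivity of every coefficient, which rely on $0<\kappa_i<1$ for $i\le r$ (guaranteed since $G$ is simple) and on the maximality defining $m$.
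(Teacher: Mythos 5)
Your proof is correct, but it follows a genuinely different route from the paper's. The paper proves the lemma by a two-case analysis in the raw variables $(\alpha,\alpha_1,\dots,\alpha_{r+s})$: if $\alpha$ exceeds an explicit threshold $\Gamma_\ag(\epsilon)$, the negative term in Corollary~\ref{scalar} is discarded and each remaining term is estimated separately (the sum $\unc\sum\kappa_id_i/\alpha_i$ being bounded by $\kappa_m/4T_m$ via the trace constraint and the maximality of $m$); if instead $\alpha\le\Gamma_\ag(\epsilon)$ but some $\alpha_j$ exceeds a second threshold $\Gamma_j(\epsilon)$, the negative term $-\unc\frac{\alpha_j}{\alpha^2}d_j(1-\kappa_j)$ is shown to dominate. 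The compact set is then the explicit box $\{\alpha\le\Gamma_\ag(\epsilon),\ \alpha_i\le\Gamma_i(\epsilon)\}$. You instead pass to the scale-free coordinates $x_i=\alpha_i/\alpha$, write $S=N/D$, and establish the exact identity $\frac{\kappa_m}{4T_m}D-N=P+C$ with $P\ge 0$, after which compactness comes from coercivity of $P+\epsilon D$ (all coefficients of $x_i$ and $1/x_i$ strictly positive, the latter thanks to the $\epsilon D$ term). I verified the identity: the $1/x_i$ coefficients combine to $\frac{d_iT_i}{4}\bigl(\frac{\kappa_m}{T_m}-\frac{\kappa_i}{T_i}\bigr)$ for $i\le r$ and $\frac{\kappa_m d_iT_i}{4T_m}$ for $i>r$, with $C=\frac{\kappa_m nT_\ag}{4T_m}-\unm\sum_i d_i(1-\kappa_i)-\frac n4$, exactly as you claim, and the homeomorphism $(0,\infty)^{r+s}\cong\mca^{\dca}_{K,T}$ makes the sublevel set a legitimate choice of $\cca_\epsilon$. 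The paper's approach buys explicit constants and a concrete compact set, and it follows the strategy of the earlier papers it cites, which extends to settings where $S$ is not such a clean rational function of the normalized variables. Your approach buys a structural explanation that the paper's estimates obscure: the identity $S=\frac{\kappa_m}{4T_m}-(P+C)/D$ shows that $\kappa_m/4T_m$ is precisely the asymptotic value along the unique degenerate direction $x_m\to 0$ (where the coefficient in $P$ vanishes), it makes transparent why $\epsilon$ cannot be removed, and it yields for free the global bound $S\le\frac{\kappa_m}{4T_m}$ whenever $C\ge 0$ -- information relevant to the interplay between Theorems~\ref{sufcon} and~\ref{neccon} that the case-by-case estimates do not expose.
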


\begin{proof}
Suppose $g \in \mathcal{M}^{\dca}_{K,T}$ satisfies~\eqref{metric}. The equality $\tr_{g}T=1$ implies
\begin{align}\label{tr_expl}
\frac{nT_\ag}{\alpha}+\sum_{i=1}^{r+s}\frac{d_iT_i}{\alpha_i}=1.
\end{align}
Consequently,
\begin{align}\label{simple_tr_est}
\alpha>nT_\ag,\qquad \alpha_i>d_iT_i,\qquad i=1,\ldots,r+s.
\end{align}
Assume 
\begin{align*}
\alpha>\Gamma_\ag(\epsilon)&=\max\bigg\{\frac{n}{2\epsilon},\frac{\sum_{i=1}^{r+s}d_i(1-\kappa_i)}{\epsilon}\bigg\}.
\end{align*}
Using~\eqref{tr_expl}, we find
\begin{align*}
S_g&\le\unm \sum_{i=1}^{r+s} \frac{d_i (1-\kappa_i)}{\alpha} + \frac{n}{4 \alpha}+ \unc \sum_{i=1}^{r} \frac{\kappa_i d_i}{\alpha_i} \\
&<\unm \sum_{i=1}^{r+s} \frac{d_i (1-\kappa_i)}{\Gamma_\ag(\epsilon)} + \frac{n}{4 \Gamma_\ag(\epsilon)}+ \frac{\kappa_m}{4T_m} \sum_{i=1}^{r} \frac{d_iT_i}{\alpha_i} \le \frac{\kappa_m}{4T_m}+\epsilon.
\end{align*}
Next, assume $\alpha\le\Gamma_\ag(\epsilon)$ and
\begin{align*}
\alpha_j>\Gamma_j(\epsilon)&=\frac{2\Gamma_\ag(\epsilon)^2}{T_\ag d_j(1-\kappa_j)}\bigg(\sum_{i=1}^{r+s} \frac{d_i (1-\kappa_i)}{n} + \unm\bigg)
\end{align*}
for some $j$ between 1 and $r+s$. By virtue of \eqref{tr_expl} and~\eqref{simple_tr_est},
\begin{align*}
S_g&<-\unc \sum_{i=1}^{r+s} \frac{\alpha_i}{\Gamma_\ag(\epsilon)^2} d_i (1-\kappa_i) + \unm \sum_{i=1}^{r+s} \frac{d_i (1-\kappa_i)}{nT_\ag} + \frac1{4 T_\ag}+ \unc \sum_{i=1}^{r} \frac{\kappa_i d_i}{\alpha_i}
\\
&<-\unc \frac{\Gamma_j(\epsilon)}{\Gamma_\ag(\epsilon)^2} d_j (1-\kappa_j) + \frac1{2T_\ag}\bigg(\sum_{i=1}^{r+s} \frac{d_i (1-\kappa_i)}n + \unm\bigg)+ \frac{\kappa_m}{4T_m} \sum_{i=1}^{r} \frac{d_iT_i}{\alpha_i}\le \frac{\kappa_m}{4T_m}.
\end{align*}
Thus, we proved~\eqref{estimate} for metrics in $\mathcal{M}^{\dca}_{K,T}$ lying outside the set
\[
\cca_\epsilon = \big\{g \in \mathcal{M}^{\dca}_{K,T}  \,\big|\, \eqref{metric}~\mbox{holds with}~\alpha\le\Gamma_\ag(\epsilon)~\mbox{and}~\alpha_i\le\Gamma_i(\epsilon)~\mbox{for}~i=1,\ldots,r+s\big\}.
\]
It is easy to check that $\cca_\epsilon$ is compact; cf.~\cite[Lemma~2.24]{MGAP18}.
\end{proof}

\begin{proof}[Proof of Theorem \ref{sufcon}]
Denote $U= \tr_QT -d_mT_m$. For $t>U$, consider the metric $g_t\in\mca_{K}^\dca$ satisfying
\begin{align*}
g_t=  t Q|_{\ag} &+  t Q|_{\kg_1} + \cdots +  t Q|_ {\kg_{m-1}} 
\\ &+ \phi(t)Q|_{\kg_m} +  tQ|_ {\kg_{m+1}} + \cdots +   tQ|_ {\kg_{r+s}}, 
\qquad 
\phi(t) = \frac {d_mT_mt}{t- U}.
\end{align*}
Straightforward verification shows that 
$g_t$ lies in~$\mca_{K,T}^\dca$. By Corollary~\ref{scalar},
\begin{align*}
S_{g_t} =  &\frac1{4t}d_m (1-\kappa_m) -  \frac{\phi(t)}{4t^2} d_m (1-\kappa_m) \\
&+ \frac{1}{4t} \sum_{i=1}^{r+s} d_i (1-\kappa_i) + \frac{n}{4t} + \frac{1}{4t} \sum_{i=1}^{r} \kappa_i d_i - \frac{1}{4t}\kappa_md_m + \frac{\kappa_md_m}{4\phi(t)}.
\end{align*}
Furthermore, in light of~\eqref{hyp_thm},
\begin{align}\label{t2St}
4\lim_{t\to\infty}t^2\frac{d}{dt} S_{g_t} & = -d_m(1-\kappa_m) - \sum_{i=1}^{r+s} d_i (1-\kappa_i) - n - \sum_{i=1}^{r} \kappa_i d_i +\kappa_md_m+ \frac{\kappa_mU}{ T_m}  \notag 
\\
& = -(n+d_1+\cdots+d_{r+s}) -d_m +2\kappa_md_m+ \frac{\kappa_m( \tr_QT-d_mT_m)}{ T_m}  \notag
\\
& = -d - d_m + d_m \kappa_m + \frac{\kappa_m\tr_QT}{ T_m}<0.
\end{align}
We conclude that $\frac{d}{dt} S_{g_t}<0$ for sufficiently large $t$, which implies the existence of $t_0\in(U,\infty)$ such that
\[
S_{g_{t_0}} >\lim_{t \to \infty} S_{g_t} = \frac{\kappa_m}{4T_m}.
\]
Using Lemma \ref{lemmacompact} with 
\[
\epsilon =  \unm\Big(S_{g_{t_0}} - \frac{\kappa_m}{4T_m}\Big)>0
\] yields
\begin{equation}\label{maxcompact}
S_{g'} < \frac{\kappa_m}{4T_m}+\epsilon=\unm S_{g_{t_0}}+\frac{\kappa_m}{8T_m}< S_{g_{t_0}},\qquad g' \in \mathcal{M}^{\dca}_{K,T}\setminus\cca_\epsilon.
\end{equation} 
Since $\cca_\epsilon$ is compact, the functional $S|_{\mathcal{M}^{\dca}_{K,T}}$ attains its global maximum on $\cca_\epsilon$ at some ${g_{\max} \in \cca_\epsilon}$. Obviously, $g_{t_0}$ lies in $\cca_\epsilon$. Thus, in light of~\eqref{maxcompact},
\[
S_{g'} \leq S_{g_{\max}}
\] 
for all $g' \in \mathcal{M}^{\dca}_{K,T}$.
\end{proof}

Proposition~\ref{variational_lemma} implies the existence of a constant $c\in\mathbb R$ such that the Ricci curvature of $g_{\max}$ equals~$cT$. This constant is necessarily positive by ~\cite[Theorem~1.84]{Bss} and~\cite[Theorem~7.61]{Bss} since $G$ is simple.

\begin{remark}\label{degenerate}
Throughout Section~\ref{sec_results}, we assumed that $T$ was positive-definite. If it is only positive-semidefinite, one easily sees that~\eqref{prescribed} cannot be satisfied for any $g\in\mca_K$ unless $T_i\ne0$ for all $i=1,\ldots,r+s$. On the other hand, Theorem~\ref{sufcon} holds if $T_\ag=0$ and $T_1,\ldots,T_{r+s}$ are positive.
\end{remark}

\subsection{The necessary condition}\label{nec}

Our next result shows that solutions to \eqref{prescribed} do not always exist.

\begin{theorem}\label{neccon}
Let $\Ad(K)|_{\ag}$ be irreducible. Consider a left-invariant naturally reductive metric $T\in\mca_K$ satisfying~(\ref{T_gen}). If there exists $g\in\mathcal{M}_K$ such that~(\ref{prescribed}) holds, then
\begin{align}\label{ineq_nec_cond}
n T_\ag \max_{i=1,\ldots,r}\frac{\kappa_i}{T_i} < 2 \sum_{j=1}^{r} d_j (1-\kappa_j) + 2s + n.
\end{align}
\end{theorem}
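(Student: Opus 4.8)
The plan is to translate $\Ricci_g=cT$ into scalar equations and to squeeze the single quantity $4cnT_\ag$ between a lower bound coming from the equations on the ideals $\kg_i$ and an upper bound coming from the equation on $\ag$. First I would reduce to the diagonal case. Diagonalising $h=g|_{\zg(\kg)}$ in a $Q$-orthonormal basis produces a decomposition $\dca$ of the form~\eqref{centre_dec} for which $g$ has the shape~\eqref{metric}. By Proposition~\ref{Ric}, $\Ricci_g$ is then diagonal with respect to the same $\dca$, and since $T=\tfrac1c\Ricci_g$, the tensor $T$ automatically has the shape~\eqref{T}. This reduction is harmless because the data entering~\eqref{ineq_nec_cond} — the constant $T_\ag$ and, for $i\le r$, the $T_i$ and $\kappa_i$, together with $s=\dim\zg(\kg)$ — are independent of the choice of $\dca$.

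Writing $x_i=\alpha_i/\alpha>0$, the equation $\Ricci_g=cT$ becomes, via Proposition~\ref{Ric} on each ideal,
\[
\kappa_i + (1-\kappa_i)x_i^2 = 4cT_i, \qquad i=1,\ldots,r+s,
\]
and, via the irreducible formula~\eqref{Rica_irred} on $\ag$,
\[
4cT_\ag = 1 + \tfrac2n\sum_{i=1}^{r+s}d_i(1-\kappa_i) - \tfrac2n\sum_{i=1}^{r+s}x_i\,d_i(1-\kappa_i).
\]
For the lower bound I use the first family: for $i\le r$ we have $0<\kappa_i<1$ and $x_i>0$, so $4cT_i=\kappa_i+(1-\kappa_i)x_i^2>\kappa_i$, whence $4c>\kappa_i/T_i$ for every such $i$ and therefore $4c>\max_{i=1,\ldots,r}\kappa_i/T_i$ (in particular $c>0$). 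Multiplying by $nT_\ag>0$ gives
\[
nT_\ag\max_{i=1,\ldots,r}\frac{\kappa_i}{T_i} < 4cnT_\ag.
\]

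For the upper bound I use the $\ag$ equation. Every summand $x_i\,d_i(1-\kappa_i)$ is strictly positive, so dropping the last sum and using $\sum_{i=1}^{r+s}d_i(1-\kappa_i)=\sum_{i=1}^{r}d_i(1-\kappa_i)+s$ (recall $\kappa_i=0$, $d_i=1$ for $i>r$) gives $4cT_\ag<1+\tfrac2n(\sum_{i=1}^{r}d_i(1-\kappa_i)+s)$; multiplying by $n$,
\[
4cnT_\ag < n + 2\sum_{i=1}^{r}d_i(1-\kappa_i) + 2s.
\]
Chaining the two displayed bounds for $4cnT_\ag$ yields precisely~\eqref{ineq_nec_cond}. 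The argument is purely algebraic, so there is no real analytic difficulty; the one thing to watch is that both strict inequalities are forced by the positivity $\alpha_i>0$ of a genuine metric (used once in each family of equations), and that the degenerate case $r=0$, where $K$ is a torus, must be excluded or interpreted via the empty maximum.
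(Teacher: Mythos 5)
Your proof is correct, but it follows a genuinely different route from the paper's. The paper proves Theorem~\ref{neccon} variationally: it invokes Proposition~\ref{variational_lemma} to see that a solution $g$ of~\eqref{prescribed} must be a critical point of $S|_{\mca_{K,T}^{\dca}}$, builds for each $j=1,\ldots,r$ an explicit curve $\big(g_t^j\big)$ inside the constraint set passing through $g$, and extracts~\eqref{ineq_nec_cond} from $\tfrac{d}{dt}S_{g_t^j}\big|_{t=0}=0$ after discarding two manifestly negative terms. You bypass the variational principle altogether: after the same diagonalization step as in the opening of Section~\ref{sec_results} (your reduction needs $c\neq 0$ in order to write $T=\tfrac1c\Ricci_g$, but $c=0$ is impossible because Proposition~\ref{Ric} shows $\Ricci_g|_{\kg_i}\neq 0$), you read the equation $\Ricci_g=cT$ off Proposition~\ref{Ric} and~\eqref{Rica_irred} block by block and sandwich the single quantity $4cnT_\ag$: the $\kg_i$-blocks with $i\le r$ give $4c>\kappa_i/T_i$ because $(1-\kappa_i)x_i^2>0$ (with $x_i=\alpha_i/\alpha$), while the $\ag$-block gives $4cnT_\ag<n+2\sum_{j=1}^r d_j(1-\kappa_j)+2s$ because the dropped sum $\tfrac2n\sum_i x_i d_i(1-\kappa_i)$ is positive. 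In substance your discarded terms play the same role as the paper's two negative terms, but your argument is more elementary---no scalar curvature functional, no curves, no tangency lemma---and it yields as a by-product the explicit two-sided bound $\max_{i\le r}\kappa_i/T_i<4c<\big(n+2\sum_{j=1}^r d_j(1-\kappa_j)+2s\big)/(nT_\ag)$, in particular $c>0$. What the paper's formulation buys is uniformity with its overall variational machinery: the same criticality argument transfers to the overdetermined reducible setting of~\eqref{alternative_prescribed}, where the full componentwise equation on $\ag$ that you rely on is replaced by a mere trace condition. Your closing caveats (strictness coming from $\alpha_i>0$, and excluding or trivializing the case $r=0$) are handled appropriately.
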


\begin{proof}
As we saw in the beginning of Section~\ref{sec_results}, we can assume that both $g$ and $T$ lie in~$\mca_K^{\dca}$ for some fixed decomposition $\dca$ of the form~\eqref{centre_dec}. Let formulas~\eqref{metric} and~\eqref{T} hold. Given an integer $j$ between 1 and $r$, denote
\begin{align*}
V_j=-\ln\Big(1+\frac{n T_\ag\alpha_j}{d_j T_j\alpha}\Big).
\end{align*}
For $t>V_j$, consider the metric $g_t^j\in\mca_K^{\dca}$ satisfying
\begin{align*}
g_t^j&= f_j(t) Q|_{\ag}+\sum_{i=1}^{r+s}e^{\delta_i^jt}\alpha_i Q|_{\kg_i},\qquad 
f_j(t)=\frac{n T_\ag\alpha\alpha_j}{n T_\ag\alpha_j+d_j T_j\alpha-e^{-t}d_j T_j\alpha},
\end{align*}
where $\delta_i^j$ is the Kronecker symbol. Straightforward computation shows that $g_t^j$ lies in~$\mca_{K,T}^{\dca}$ and $g_0^j$ coincides with~$g$. We thus obtain a curve $\big(g_t^j\big)_{t>V_j}\subset\mca_{K,T}^{\dca}$ passing through $g$ at~$t=0$.

By Proposition~\ref{variational_lemma}, $g$ is a critical point of $S|_{\mca_{K,T}^{\dca}}$. Consequently,
\begin{align}\label{ddtS=0}
\frac d{dt}S_{g_t^j}\Big|_{t=0}=0.
\end{align}
Corollary~\ref{scalar} yields
\begin{align*}
S_{g_t^j}&=-\frac1{4f_j(t)^2}\sum_{i=1}^{r+s}e^{\delta_i^jt}\alpha_i d_i (1-\kappa_i)
%-\frac1{4f_j(t)^2}\sum_{i=1}^{s} \gamma_i \\
%&\hphantom{=}~
+ \frac1{2f_j(t)}\sum_{i=1}^{r+s} d_i (1-\kappa_i)
%+ \frac{s}{2f_j(t)} 
+ \frac n{4f_j(t)}+ \unc \sum_{i=1}^{r} \frac{\kappa_i d_i}{e^{\delta_i^jt}\alpha_i},
\end{align*}
which means
\begin{align*}
\frac d{dt}S_{g_t^j}\Big|_{t=0}&=-\frac{d_jT_j}{2nT_\ag \alpha \alpha_j}\sum_{i=1}^{r+s}\alpha_i d_i (1-\kappa_i) \\
&\hphantom{=}~- \frac{\alpha_j}{4\alpha^2} d_j (1-\kappa_j) +\frac{d_jT_j}{2nT_\ag\alpha_j}\sum_{i=1}^{r+s} d_i (1-\kappa_i) 
+ \frac{d_jT_j}{4T_\ag\alpha_j} - \frac{\kappa_j d_j}{4\alpha_j}.
\end{align*}
The first two terms on the right-hand side are negative. Therefore, by virtue of~\eqref{ddtS=0},
\begin{align*}
\frac{d_jT_j}{2nT_\ag\alpha_j}\sum_{i=1}^{r+s} d_i (1-\kappa_i) + \frac{d_jT_j}{4T_\ag\alpha_j} - \frac{\kappa_j d_j}{4\alpha_j}>0.
\end{align*}
Multiplying by $\frac{4nT_\ag\alpha_j}{d_jT_j}$ and rearranging, we obtain
\begin{align*}
nT_\ag\frac{\kappa_j}{T_j}<2\sum_{i=1}^{r} d_i (1-\kappa_i) + 2s + n.
\end{align*}
Since this holds for any $j=1,\ldots,r$, the assertion of the theorem follows.
\end{proof}

%\subsection{The case where $\Ad(K)|_{\ag}$ is reducible}\label{subsec_aredu}
\begin{remark}
As we saw, if $\Ad(K)|_{\ag}$ is reducible, the system~\eqref{prescribed} is overdetermined in general. In this case, it is, perhaps, more natural to consider the problem of finding $g\in\mathcal M_K$ that satisfies
\begin{align}\label{alternative_prescribed}
\Ricci_{g}|_{\kg}=cT|_{\kg}, \qquad
\tr_Q\Ricci_{g}|_{\ag}=c\tr_QT|_{\ag}.
\end{align}
Arguing as in the proof of Proposition~\ref{variational_lemma}, one can show that~\eqref{alternative_prescribed} holds for $g\in\mathcal M_{K,T}^\dca$ if and only if $g$ is a critical point of~$S|_{\mca_{K,T}^\dca}$. With the methods developed above, one easily obtains analogues of Theorems~\ref{sufcon} and~\ref{neccon} for problem~\eqref{alternative_prescribed}.
\end{remark}

\begin{remark}\label{semisimple}
Throughout Section~\ref{sec_results}, we have assumed the group $G$ was simple. This hypothesis can be weakened. More precisely, suppose that $G$ is compact semisimple and that $\mathfrak g$ and $\mathfrak k$ do not share any non-trivial common ideals. Let $Q$ be a bi-invariant metric on~$G$. One can then show that Theorems~\ref{sufcon} and~\ref{neccon} hold with the same proof as above. Every $T$ of the form~\eqref{T_gen} (and hence~\eqref{T}) is naturally reductive with respect to~$G\times K$; however, the converse may be false if $G$ is not simple (see~\cite[pages~9 and~20]{DZ79}). 
\end{remark}

\section{The case where $K$ is simple}\label{Ksimple}

In this section, we assume that $K$ is simple and $\Ad(K)|_{\ag}$ is irreducible. Then Theorems~\ref{sufcon} and~\ref{neccon} yield conditions for the solvability of~\eqref{prescribed} that are at the same time necessary and sufficient.

Suppose $T\in\mca_K$. The numbers $r$ and $s$ in decompositions~\eqref{deck} and~\eqref{centre_dec} equal 1 and~0, respectively. Thus,
\begin{align}\label{T2}
T = T_{\ag} Q|_{\ag}   + T_1 Q|_{\kg_1}
\end{align}
for two constants $T_\ag,T_1>0$.

\begin{proposition}\label{prop_simple_gr}
Assume $K$ is simple and $\Ad(K)|_{\ag}$ is irreducible. Given $T\in \mathcal M_K$, a metric $g \in \mathcal{M}_{K}$ satisfying~\eqref{prescribed} for some~$c>0$ exists if and only if 
\begin{align}\label{cond_simple_gr}
(2 d_1 (1-\kappa_1)+n)T_1>n T_{\ag}\kappa_1.
\end{align}
When it exists, this metric is unique up to scaling.
\end{proposition}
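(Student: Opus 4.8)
The plan is to specialise Theorems~\ref{sufcon} and~\ref{neccon}, whose hypotheses turn out to coincide in this setting, and then to settle uniqueness by a direct computation with the Ricci formulas. First I would record the structural simplifications forced by the hypotheses. Since $K$ is simple, its centre $\zg(\kg)$ is trivial, so $s=0$ and $r=1$ in the decompositions~\eqref{deck} and~\eqref{centre_dec}. In particular the choice of $\dca$ is vacuous, $\mca_K=\mca_K^{\dca}$, and $T$ has the form~\eqref{T2}. Consequently $d=n+d_1$, $\tr_Q T=nT_\ag+d_1T_1$, and the index $m$ in Theorem~\ref{sufcon} must be $m=1$.

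For the equivalence of solvability with~\eqref{cond_simple_gr}, I would substitute these values into the two theorems. Putting $m=1$, $d=n+d_1$, and $\tr_Q T=nT_\ag+d_1T_1$ into~\eqref{hyp_thm} and clearing the positive factor $T_1$, the sufficient condition becomes exactly $nT_\ag\kappa_1<(2d_1(1-\kappa_1)+n)T_1$, that is,~\eqref{cond_simple_gr}. Likewise, setting $r=1$ and $s=0$ in the necessary condition~\eqref{ineq_nec_cond} and clearing $T_1$ reproduces~\eqref{cond_simple_gr} verbatim. Thus~\eqref{cond_simple_gr} is simultaneously the hypothesis of Theorem~\ref{sufcon} and the conclusion of Theorem~\ref{neccon}, so (using the variational principle and the positivity of $c$ noted after Theorem~\ref{sufcon}) a metric $g\in\mca_K$ solving~\eqref{prescribed} with $c>0$ exists if and only if~\eqref{cond_simple_gr} holds.

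It remains to prove uniqueness, and this is where the explicit formulas do the work. Writing $g=\alpha Q|_\ag+\alpha_1 Q|_{\kg_1}$ and $x=\alpha_1/\alpha$, Proposition~\ref{Ric} together with~\eqref{Rica_irred} turns~\eqref{prescribed} into the two scalar equations $\tfrac14(\kappa_1+(1-\kappa_1)x^2)=cT_1$ on $\kg_1$ and $\tfrac14-\tfrac{d_1(1-\kappa_1)}{2n}(x-1)=cT_\ag$ on $\ag$. Since the Ricci tensor is scale invariant, a solution is determined up to scaling precisely by the pair $(x,c)$. Eliminating $c$ by cross-multiplication yields the quadratic
\begin{align*}
T_\ag(1-\kappa_1)x^2+\tfrac{2d_1(1-\kappa_1)}{n}\,T_1\,x+T_\ag\kappa_1-\tfrac{n+2d_1(1-\kappa_1)}{n}\,T_1=0.
\end{align*}
Its leading and linear coefficients are positive, while its constant term is negative exactly when~\eqref{cond_simple_gr} holds. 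Hence, under~\eqref{cond_simple_gr}, the product of the roots is negative, so there is a unique positive root $x>0$; this determines $g$ up to scaling, and the corresponding $c=\tfrac1{4T_1}(\kappa_1+(1-\kappa_1)x^2)$ is automatically positive.

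The only point requiring care is the last step: one must note that only positive $x$ correspond to genuine metrics (as $\alpha,\alpha_1>0$) and that the sign of the constant term is governed exactly by~\eqref{cond_simple_gr}. Given this, the sign pattern $({+},{+},{-})$ of the coefficients forces a single admissible root, yielding both existence and uniqueness at once; in fact this quadratic analysis reproves the solvability criterion independently of Theorems~\ref{sufcon} and~\ref{neccon}. I do not anticipate any serious obstacle, since the whole argument is elementary once the Ricci formulas are inserted.
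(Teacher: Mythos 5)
Your proposal is correct. The first half --- specialising Theorems~\ref{sufcon} and~\ref{neccon} to $r=1$, $s=0$, $m=1$ and checking that both \eqref{hyp_thm} and \eqref{ineq_nec_cond} collapse to \eqref{cond_simple_gr} --- is exactly the paper's argument. Where you genuinely diverge is uniqueness. The paper compares two putative solutions $\Ricci_g=cT$ and $\Ricci_{g'}=c'T$ with $c\ge c'$: the $\kg_1$-component of Proposition~\ref{Ric} forces $\alpha_1/\alpha\ge\alpha_1'/\alpha'$, while the $\ag$-component \eqref{Rica_irred} forces the reverse inequality, so the ratios agree and the metrics coincide up to scaling. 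You instead eliminate $c$ from the two scalar equations and study the resulting quadratic in $x=\alpha_1/\alpha$; since its leading and linear coefficients are positive (here one uses $0<\kappa_1<1$, which the paper establishes from simplicity of $G$ and of $\kg_1$) and its constant term is negative precisely when \eqref{cond_simple_gr} holds, there is exactly one positive root under \eqref{cond_simple_gr} and no positive root otherwise. This buys you more than the paper's argument: it is a self-contained proof of the whole proposition --- existence, uniqueness, and non-existence --- bypassing the variational principle, Lemma~\ref{lemmacompact}, and Theorems~\ref{sufcon} and~\ref{neccon} entirely, and it exhibits the solution explicitly as the positive root; moreover $c=\tfrac{1}{4T_1}\big(\kappa_1+(1-\kappa_1)x^2\big)>0$ automatically, so the restriction to $c>0$ in the statement costs nothing. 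What the paper's route buys is economy: once the general theorems are in place, the equivalence is immediate, and the two-solution comparison avoids solving any equation. Your quadratic reduction, by contrast, works only because $r=1$ and $s=0$ turn \eqref{prescribed} into two scalar equations in the single ratio $x$; it does not extend to larger $r$ or $s$, which is precisely the regime the paper's variational machinery is built for.
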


\begin{proof}
Since $r=1$ and $s=0$, we can transform~\eqref{hyp_thm} and~\eqref{ineq_nec_cond} into~\eqref{cond_simple_gr} by elementary computations. In light of this fact, the equivalence of the statements in the proposition follows from Theorems~\ref{sufcon} and~\ref{neccon}. It remains to prove uniqueness up to scaling. Take $g\in\mca_K$ and $g'\in\mca_K$ with
\begin{align*}
\Ricci_{g}=cT,\qquad \Ricci_{g'}=c'T,\qquad c,c'>0.
\end{align*}
Without loss of generality, assume $c\ge c'$. There exist $\alpha,\alpha_1,\alpha',\alpha_1'$ such that
\[
g = \alpha Q|_{\ag}   + \alpha_1 Q|_{\kg_1},\qquad g' = \alpha' Q|_{\ag}   + \alpha_1' Q|_{\kg_1}.
\] 
The second of the three formulas in Proposition~\ref{Ric} implies
\begin{align*}
\tfrac14(1-\kappa_1)\Big(\frac{\alpha_1}{\alpha} - \frac{\alpha_1'}{\alpha'}\Big)\Big(\frac{\alpha_1}{\alpha} + \frac{\alpha_1'}{\alpha'}\Big)=(c-c')T_1,
\end{align*}
which means $\tfrac{\alpha_1}{\alpha} \ge \tfrac{\alpha_1'}{\alpha'}$. At the same time, by~\eqref{Rica_irred},
\begin{equation*}
 -\unm \frac{d_1(1-\kappa_1)}{n}\bigg(\frac{\alpha_1}{\alpha}-\frac{\alpha_1'}{\alpha'}\bigg) = (c - c')T_\ag,
\end{equation*}
whence $\tfrac{\alpha_1}{\alpha} \le \tfrac{\alpha_1'}{\alpha'}$. We conclude that $g$ and $g'$ coincide up to scaling.
\end{proof}

\begin{remark}
It is easy to see that Proposition~\ref{prop_simple_gr} continues to hold if we assume $T$ is a degenerate left-invariant tensor field on $G$ satisfying~\eqref{T2} with $T_\ag,T_1\ge0$.
\end{remark}

\section{Examples}\label{examples}

We now discuss a series of examples that illustrate our results and enable us to draw the graph of the scalar curvature functional. Assume that $G=\SO(6)$ and $K=\SO(3)\times\SO(3)$, embedded diagonally. Then $G/K$ is an isotropy irreducible symmetric space. The center of $\kg$ equals $\{0\}$, which means the only decomposition $\dca$ of the form~\eqref{centre_dec} is the trivial one, and hence $\mathcal M_K$ and $\mathcal M_K^\dca$ coincide.
The simple ideals of $\kg$ are $\kg_1=\sog(3)\oplus\{0\}$ and $\kg_2=\{0\}\oplus\sog(3)$. Thus,
$$r=2,\qquad s=0,\qquad d_1=d_2=3,\qquad n=9.$$
 The constants $\kappa_1$ and $\kappa_2$ both equal~$\unc$; see~\cite[Page 51, Example 1]{DZ79}.
 
Given  $T\in\mca_K$, there exist positive constants $T_\ag,T_1,T_2$ such that
\begin{align*}
T=T_\ag Q|_{\ag}+T_1Q|_{\kg_1}+T_2Q|_{\kg_2}.
\end{align*}
The solvability properties of~\eqref{prescribed} do not change if one rescales~$T$. Therefore, we may assume without loss of generality that~$T_\ag=1$. The sufficient condition of Theorem~\ref{sufcon} becomes
\begin{align}\label{suf_ex}
23\min\{T_1,T_2\}>3+T_1+T_2,
\end{align}
and the necessary conditions of Theorem~\ref{neccon} becomes
\begin{align}\label{nec_ex}
8\min\{T_1,T_2\}>1.
\end{align}
Figure~\ref{T_chart_T1T2} shows the sets of points $(T_1,T_2)$ satisfying these inequalities. The cylindrical algebraic decomposition (CAD) algorithm---see \cite{CD98}---predicts the existence of a solution to~\eqref{prescribed} if and only if
$$\frac{12+T_1+\sqrt{T_1^2+24T_1-3}}{98} < T_2 < \frac{196 T_1^2-48T_1+3}{4 T_1}.$$
The ``middle" region in Figure~\ref{T_chart_T1T2} is the set of $(T_1,T_2)$ for which this condition holds but~\eqref{suf_ex} does not. The red dots mark the choices of $(T_1,T_2)$ considered in Examples~\ref{example_suf}--\ref{example_funny_max} below.

\begin{figure}[h]\label{T_chart_T1T2} %[h] para here [b] para bottom [t] para top
\includegraphics[width=0.63\textwidth]{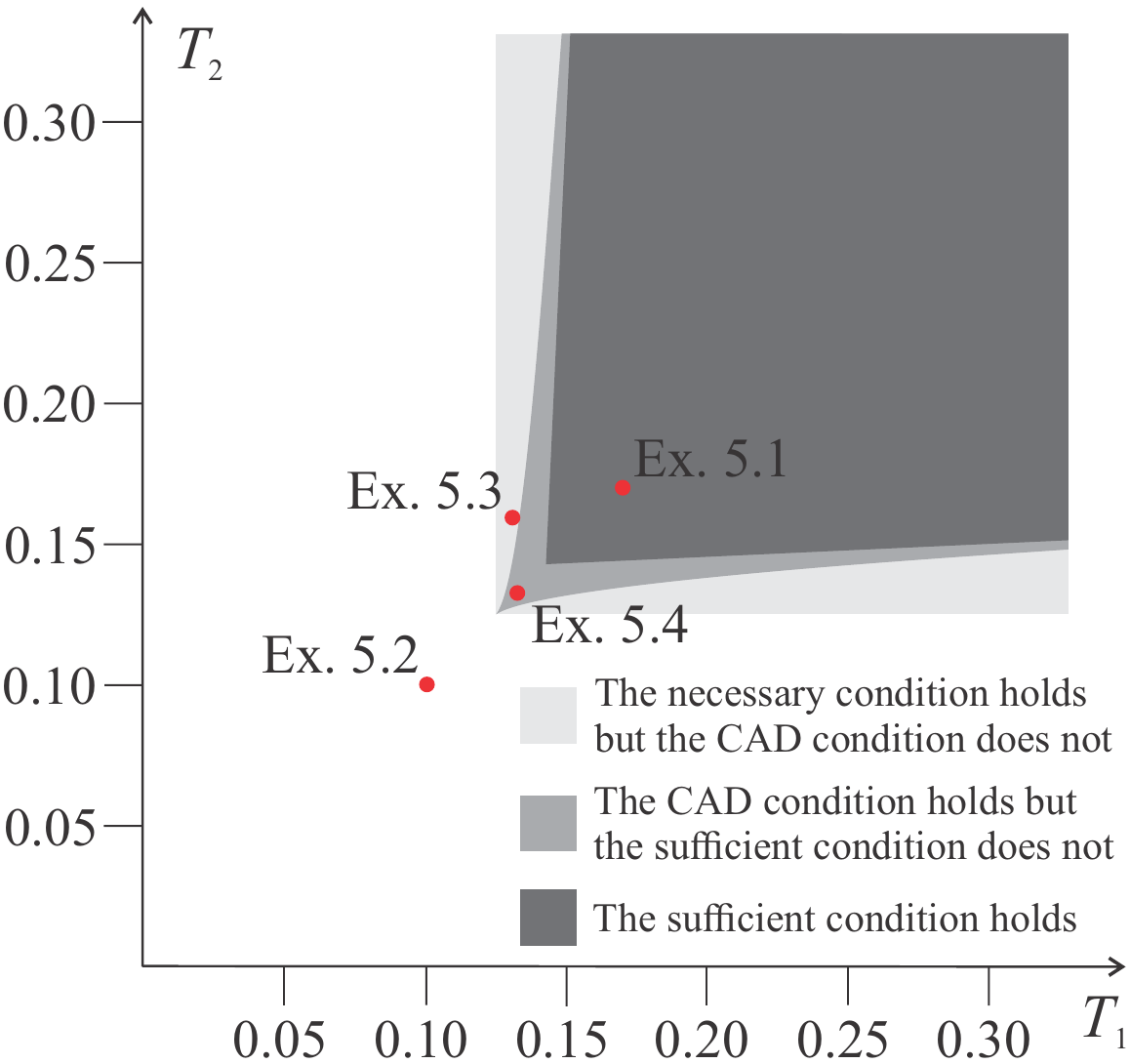}
\caption{Necessary, sufficient and CAD conditions}\label{T_chart_T1T2}
%\caption{Global maximum with~\eqref{hyp_thm} satisfied.}\label{Figure_globalmax_sufnec}
\end{figure}

By Proposition~\ref{variational_lemma}, 
 $g\in\mathcal{M}_{K,T}$ satisfies~\eqref{prescribed} for some $c>0$ if and only if it is a critical point of the scalar curvature functional~$S|_{\mca_{K,T}}$. Suppose $g\in\mathcal{M}_{K,T}$ is given by
\begin{align}\label{dec_g_3}
g&=\alpha Q|_{\ag}+\alpha_1Q|_{\kg_1}+\alpha_2Q|_{\kg_2},\qquad \alpha,\alpha_1,\alpha_2>0.
\end{align}
The condition $\tr_gT=1$ becomes
\begin{align*}
\frac{9}{\alpha}+\frac{3T_1}{\alpha_1}+\frac{3T_2}{\alpha_2}=1,\ \text{ and thus }\
\frac1\alpha=\frac{\alpha_1\alpha_2-3T_1\alpha_2-3T_2\alpha_1}{9\alpha_1\alpha_2}.
\end{align*}
Hence, Corollary~\ref{scalar} implies
\begin{align*}
S_{g}
=-\frac{(\alpha_1\alpha_2-3T_1\alpha_2-3T_2\alpha_1)^2(\alpha_1+\alpha_2)}{144\alpha_1^2\alpha_2^2} + \frac{\alpha_1\alpha_2-3T_1\alpha_2-3T_2\alpha_1}{2\alpha_1\alpha_2} + \frac{3}{16\alpha_1}+\frac{3}{16\alpha_2}.
\end{align*}
This enables us to view $S|_{\mathcal{M}_{K,T}}$ as a function of the two variables $\alpha_1$ and $\alpha_2$ defined on the set
\begin{align*}
\Big\{(x,y)\in(0,\infty)^2 \,\Big|\, \frac{3T_1}x+\frac{3T_2}y<1\Big\}.
\end{align*}
Given $(T_1,T_2)$, we  use Maple to draw the graph of~$S|_{\mathcal{M}_{K,T}}$.

\begin{example}\label{example_suf} 
Suppose $(T_1,T_2)=\left(\frac{1}{6},\frac{1}{6}\right)$. Clearly, condition~\eqref{suf_ex} is satisfied. Theorem~\ref{sufcon} implies that $S|_{\mathcal{M}_{K,T}}$ assumes its global maximum at some metric~$g_{\max}$ with Ricci curvature equal to~$cT$. Figure~\ref{Figure_globalmax_sufnec} shows the graph of $S|_{\mca_{K,T}}$ as a function of $\alpha_1$ and~$\alpha_2$. The red dot marks the global maximum. 
\begin{figure}[h]\label{fgm1} %[h] para here [b] para bottom [t] para top
\includegraphics[width=0.8\textwidth]{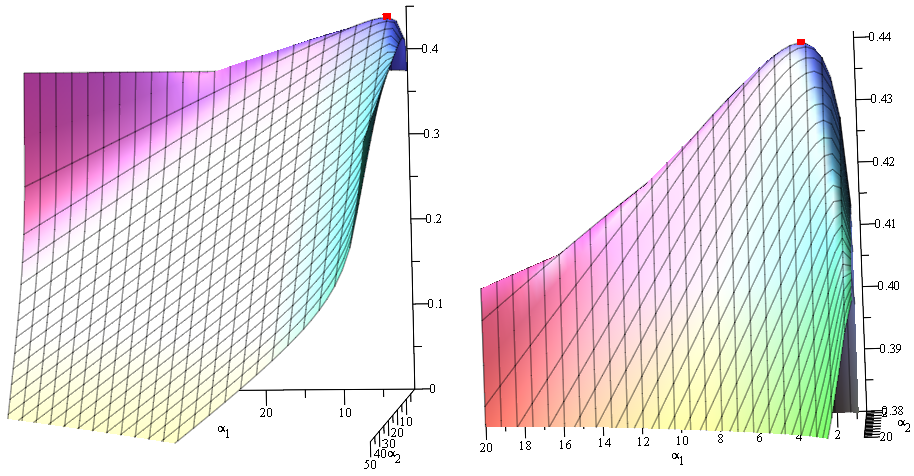}
\caption{Global maximum with~\eqref{suf_ex} satisfied.}\label{Figure_globalmax_sufnec}
\end{figure}
\end{example}

\begin{example}\label{example_nec}
Suppose  $(T_1,T_2)=\left(\frac{1}{10},\frac{1}{10}\right)$. In this case, condition~\eqref{nec_ex} fails to hold, and hence Theorem~\ref{neccon} implies that  $S|_{\mca_{K,T}}$ has no critical points. Figure~\ref{Figure_ncp_notnec} shows the graph of $S|_{\mca_{K,T}}$ as a function of $\alpha_1$ and~$\alpha_2$.
\begin{figure}[h]\label{fgm2} %[h] para here [b] para bottom [t] para top
\includegraphics[width=0.4\textwidth]{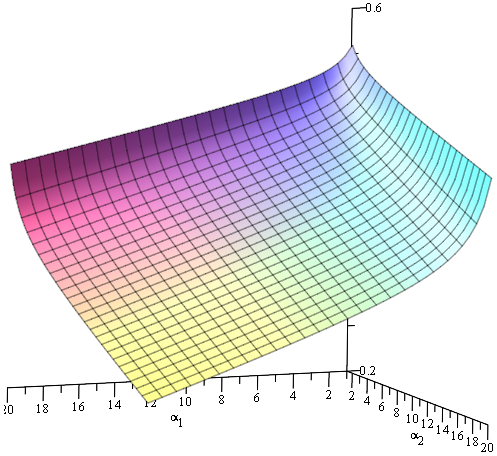}
\caption{No critical point with~\eqref{nec_ex} failing to hold.}\label{Figure_ncp_notnec}
\end{figure}
\end{example}

The necessary and sufficient conditions in Section~\ref{sec_results} do not cover the situation where~\eqref{nec_ex} is satisfied but~\eqref{suf_ex} is not. As the following two examples demonstrate, in this situation, the functional $S|_{\mca_{K,T}}$ may exhibit a variety of behaviors. 
\begin{example}\label{example_ncp}
Suppose $(T_1,T_2)=\left(\frac{13}{100},\frac{16}{100}\right)$. In this case,~\eqref{nec_ex} holds, but~\eqref{suf_ex} does not. Straightforward analysis proves that $S|_{\mca_{K,T}}$ has no critical points. Figure~\ref{Figure_ncp_notsuf} shows the graph of~$S|_{\mca_{K,T}}$.
\begin{figure}[h]\label{fgm3} %[h] para here [b] para bottom [t] para top
\includegraphics[width=0.4\textwidth]{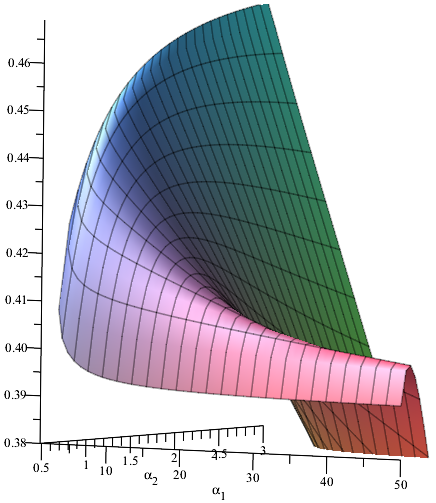}
\caption{No critical point with~\eqref{nec_ex} satisfied.}\label{Figure_ncp_notsuf}
\end{figure}
\end{example}

\begin{example}\label{example_funny_max}
Suppose $(T_1,T_2)=\left(\frac{2}{15},\frac{2}{15}\right)$. Again,~\eqref{nec_ex} holds, but~\eqref{suf_ex} does not. Figure~\ref{Figure_globmax_notsuf} shows the graph of~$S|_{\mca_{K,T}}$. Applying Lemma~\ref{lemmacompact} with $\epsilon=\tfrac1{192}$, we find
\begin{align*}
S_g<\max\Big\{\frac{\kappa_1}{4T_1},\frac{\kappa_2}{4T_2}\Big\}+\epsilon=\tfrac{91}{192}
\end{align*}
when the metric ${g\in\mca_{K,T}}$ lies outside some compact subset of~$\mca_{K,T}$. At the same time, 
\begin{align*}
S_g=\tfrac{427}{900}>\tfrac{91}{192}
\end{align*}
if $g$ is given by~\eqref{dec_g_3} with $(\alpha,\alpha_1,\alpha_2)=(45,1,1)$. Consequently, $S|_{\mca_{K,T}}$ attains its global maximum at some~${g_{\max}\in\mca_{K,T}}$.  The red dot marks this global maximum on the graph.
\begin{figure}[h]\label{fgm4} %[h] para here [b] para bottom [t] para top
\includegraphics[width=0.8\textwidth]{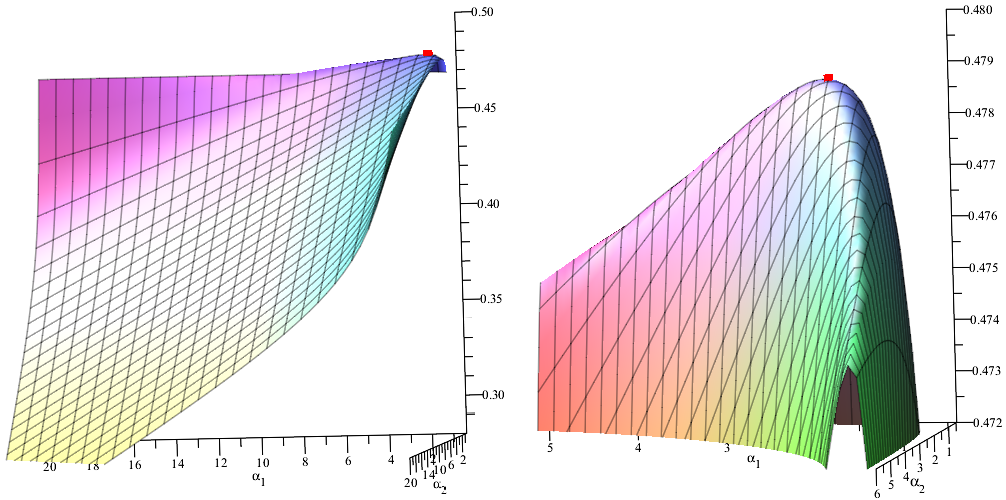}
\caption{Global maximum with~\eqref{suf_ex} failing to hold.}\label{Figure_globmax_notsuf}
\end{figure}
\end{example}

Example~\ref{example_funny_max} is particularly interesting since the behavior that $S|_{\mca_{K,T}}$ exhibits has not been observed in any previous work on the prescribed Ricci curvature problem on homogeneous spaces. In~\cite{MGAP18,AP19}, the existence of a global maximum was proven by showing that the scalar curvature decreased monotonically as the components of the metric approached infinity. Although barely visible, in Example~\ref{example_suf}, the scalar curvature decreases when $\alpha_1$ goes to $\infty$ along the ridge of the graph, whereas in Example~\ref{example_funny_max}, it increases. Nevertheless, $S|_{\mca_{K,T}}$ attains its global maximum in both examples.

%\newpage

\end{document}